\newtheorem{theorem}{Theorem}[section]
\newtheorem{lemma}[theorem]{Lemma}
\theoremstyle{definition}
\newtheorem{definition}[theorem]{Definition}
\newtheorem{example}[theorem]{Example}
\newtheorem{remark}[theorem]{Remark}
\theoremstyle{remark}
 \def\R{{\mathbb{R}}}
 \def\Z{{\mathbb{Z}}}
 \def\S{{\Sigma}}
 \def\C{{\mathbb{C}}}
 \def\sp{{\rm Sp}}
\def\h_1{{\rm H_1}}
\def\S{{\mathbb{S}}}
\def\D{{\mathbb{D}}}
\begin{document}

\newenvironment{prooff}{\medskip \par \noindent {\it Proof}\ }{\hfill
$\square$ \medskip \par}
    \def\sqr#1#2{{\vcenter{\hrule height.#2pt
        \hbox{\vrule width.#2pt height#1pt \kern#1pt
            \vrule width.#2pt}\hrule height.#2pt}}}
    \def\square{\mathchoice\sqr67\sqr67\sqr{2.1}6\sqr{1.5}6}
\def\pf#1{\medskip \par \noindent {\it #1.}\ }
\def\endpf{\hfill $\square$ \medskip \par}
\def\demo#1{\medskip \par \noindent {\it #1.}\ }
\def\enddemo{\medskip \par}
\def\qed{~\hfill$\square$}

 \title[ Signatures of Lefschetz Fibrations]
{Partial Fiber Sum Decompositions and Signatures of Lefschetz Fibrations}

\author{Adalet \c{C}engel}
 \address{Department of Mathematics, Bo\u{g}azi\c{c}i University, Bebek 34342}
 \address{School of Mathematics, University of Minnesota, Minneapolis,MN, 55455}
\email{{cenge002@umn.edu}}

\author{\c{C}a\u{g}ri Karakurt}
 \address{Department of Mathematics, Bo\u{g}azi\c{c}i University, Bebek 34342}
\email{{cagri.karakurt@boun.edu.tr}}

 \date{\today}

\begin{abstract}
In his Ph.D. thesis, Burak Ozbagci described an algorithm for computing signatures of Lefschetz fibrations where the input is a factorization of the monodromy into a product of Dehn twists. In this note, we give a reformulation of Ozbagci's algorithm which becomes much easier to implement. Our main tool  is Wall's non-additivity formula applied to what we call partial fiber sum decomposition of a Lefschetz fibration over the $2$-disk. We show that our algorithm works for bordered Lefschetz fibrations over disk and it yields a formula for the signature of branched covers where the branched loci are regular fibers. As an application, we give the explicit monodromy factorization of a Lefschetz fibration over disk whose total space has arbitrarily large positive signature for any positive fiber genus.    
\end{abstract}

 \maketitle

  \setcounter{secnumdepth}{2}
 \setcounter{section}{0}

\section{Introduction}

Donaldson's ground breaking result says that the study of symplectic $4$-manifolds up to blow-up is equivalent to that of Lefschetz fibrations over the $2$-sphere.  This relationship has been extended to the relative case by Akbulut-Ozbagci \cite{MR1825664} and Loi-Piergallini  \cite{MR1835390} who established a correspondence  between Stein manifolds and positive allowable Lefschetz fibrations. Working with Lefschetz  fibrations is more preferable as the topology of the total space, which is a  $4$-dimensional manifold, is completely determined by a monodromy factorization in mapping class groups, which can be understood through $2$-dimensional techniques. It remains a good challenge for $4$-dimensional topologist to compute invariants of a Lefschetz fibration out of its monodromy factorization. Signature is perhaps the simplest non-trivial  invariant among them. Recall that the signature $\sigma(Y)$ of a compact oriented $4$-manifold $Y$ is the signature of the intersection form on the second homology group $H_2(Y;\Z).$

Several computation techniques and formulas for signature of Lefschetz fibrations on $4$-manifolds exist in the literature. For Lefschetz fibrations over $\S^2$ with fiber genus $g=1,2$ Matsumoto \cite{Matsumoto1}; for hyperelliptic Lefschetz fibrations over surfaces with fiber genus $g \geq 3,$ Endo \cite{Endo} gave signature formulas. On the other hand, Endo and Nagami \cite{en} showed that the signature of a Lefschetz fibration over $\S^2$ can be calculated by using the signatures of relations contained in its monodromy. Ozbagci gave an algorithm which works for Lefschetz fibrations over $\D^2$ or $\S^2$ with closed fibers \cite{ozbagci}. Recently, Miyamura \cite{Miyamura} presented a formula for  Lefschetz fibrations over $\D^2$ with planar fibers. In this paper, we present an algorithm for computing the signature of Lefschetz fibrations over $\D^2$ of any genus $g$ fibers which can be closed or bordered.

A Lefschetz fibration on a smooth $4$-manifold $Y$ has a handlebody decomposition determined by a sequence of vanishing cycles \cite{Kas}. Ozbagci used this description and Wall non-additivity formula in his algorithm \cite{ozbagci}. The singular fibers of a Lefschetz fibration are obtained by attaching $2$-handles along the correspondig vanishing cycles in different regular fibers $\Sigma_g \times \D^2.$ For each attachment there is a signature contribution in the set $ \{-1,0,+1\}$ and their sum  gives the signature of the total space. Computing Ozbagci's local signature contribution is not a straightforward matter, as it is necessary to understand a presentation of the homology of the complement of the vanishing cycle in the boundary at each step.  The purpose of the present paper is to give an alternative method for computing these local signature using elementary linear algebra only. 

To state the main result, we fix our terminology first. Let $\Sigma_{g}^b$ be a compact, connected, oriented surface of genus $g$ with $b$ boundary components and ${\rm Mod}(\Sigma_{g}^b)$ denote its mapping class group, the group consisting of isotopy classes of orientation-preserving self-diffeomorphisms of $\Sigma_{g}^b$ which restrict to the identity on $\partial \Sigma_{g}^b.$ We write $\Sigma_{g}=\Sigma_{g}^0$ and ${\rm Mod}(\Sigma_{g})={\rm Mod}(\Sigma_{g}^0)$ for simplicity.

It is well-known that when the surface has no boundary components, the group ${\rm Mod}(\Sigma_{g})$ naturally  acts on $\h_1 (\Sigma_{g};\R)$ preserving the algebraic intersection form $Q$. Hence we have a canonical homomorphism from  ${\rm Mod}(\Sigma_{g})$ to  $\sp(2g, \R)$ called the symplectic representation of ${\rm Mod}(\Sigma_{g}).$  When $b\geq 1$ we pick a point  $p_j$ from each boundary component and  denote the set $\{p_1,\dots,p_b\}$ of distinguished points by $\mathfrak{d}$. The relative version of the symplectic representation for ${\mathrm{Mod}}(\Sigma_{g}^b)$ by   its action on $\h_1 (\Sigma_{g}^b;\mathfrak{d};\R)$ yields a homomorphism  ${\rm Mod}(\Sigma_{g}^b)\rightarrow {\sp}(2(g+b-1),\R)$.
It is an easy exercise to see that the above representation is obtained by connecting a distinguished boundary component  of $\Sigma_{g}^b$ to the others by adding one handles reducing the total number of boundary components to one and then capping off with a disk to obtain a closed surface of genus $g+b-1$  and seeing ${\rm Mod}(\Sigma_{g}^b)$ as a subgroup of ${\rm Mod}(\Sigma_{g+b-1})$.

Let $\gamma$ be a simple closed curve on $\Sigma_g^b$. We denote by $t_{\gamma}$, and $t_\gamma^*$ the Dehn twist about $\gamma$ and respectively the image of the Dehn twist $t_{\gamma}$ under this symplectic representation. The sign of a  real number $x$ is denoted by $\rm{sign}(x)\in \{-1,0,1\}$.   Our main result is the following.

\begin{theorem}\label{theo:main}
Given a Lefschetz fibration $f_{n}:Y_{n}\rightarrow \D^2$ with regular fiber $\Sigma_{g}^b$ and monodromy factorization $\phi=t_{\gamma_n} \cdots t_{\gamma_1};$ the signature $\sigma(Y_{n})$ of the total space $Y_{n}$ is given by the algorithm below.

For every $k\in\{1,\ldots, n\},$   we determine $\sigma_k \in \{ -1,0,1\}$ as follows. If $[\gamma_k]=0$  in $ \h_1 (\Sigma_{g}^b,\mathfrak{d} ; \R),$  then let $\sigma_k=0$. If $[\gamma_k] \not = 0$ then check if there exists a homology class $[x_k] \in \h_1 (\Sigma_{g}^b,\mathfrak{d}; \R)$ solving the following linear equation:
\begin{eqnarray}\label{eq:eq1} 
(\mathrm{Id}-t_{\gamma_k}^* \cdots t_{\gamma_1}^*)  ([x_k]) =[\gamma_k]
\end{eqnarray}
where $\mathrm{Id}$ is the identity element in the corresponding symplectic group.  If no such solution exists, let $\sigma_k=0.$ If there is a solution, let 
\begin{eqnarray}\label{eq:eq2}
\sigma_k= \mathrm{sign}(1+Q([\gamma_k], [x_k])).
\end{eqnarray}

Then the signature is given by
\[ 
\sigma(Y_{n})=-\displaystyle \sum_{k=1}^{n} \sigma_k - 
\# \{ k \in \{1,\ldots, n\}; [\gamma_k]=0  \; \text{ in } \h_1 (\Sigma_{g}^b,\mathfrak{d}; \R)\}.
\]
\end{theorem}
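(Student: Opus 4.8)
The plan is to prove the formula by induction on the number $n$ of singular fibers, the inductive step being driven by a decomposition of $Y_n$ along a \emph{partial fiber} --- the preimage of a properly embedded arc in the base disk --- together with Wall's non-additivity of the signature. The base case $n=0$ is immediate: $Y_0=\Sigma_g^b\times\D^2$ has $\sigma(Y_0)=0$, and the right-hand side is an empty sum. For the inductive step, choose a properly embedded arc $\alpha\subset\D^2$ separating the critical value of the last singular fiber from those of the first $n-1$, and write $\D^2=\D^2_+\cup_\alpha\D^2_-$ accordingly, with the $n$-th critical value lying in $\D^2_-$. Then $N:=f_n^{-1}(\alpha)\cong\Sigma_g^b\times I$ cuts $Y_n$ into $Y_{n-1}=f_n^{-1}(\D^2_+)$, the Lefschetz fibration with monodromy $\phi_{n-1}=t_{\gamma_{n-1}}\cdots t_{\gamma_1}$, and $W_n=f_n^{-1}(\D^2_-)$, the Lefschetz fibration over the disk with the single vanishing cycle $\gamma_n$.

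Next I would compute $\sigma(W_n)$ from the standard handle decomposition of a Lefschetz fibration: $W_n$ is $\Sigma_g^b\times\D^2$ with a single $2$-handle attached along $\gamma_n$ with framing $-1$ relative to the fiber framing. Since the natural map $\h_1(\Sigma_g^b;\R)\to\h_1(\Sigma_g^b,\mathfrak{d};\R)$ is injective, the condition ``$[\gamma_n]=0$'' has the same meaning in both groups; when it holds, $\gamma_n$ is separating, a subsurface it bounds capped off by the core of the handle represents a class of self-intersection $-1$, and $\sigma(W_n)=-1$, while otherwise the handle creates no new homology, the intersection form on $H_2(W_n)$ vanishes, and $\sigma(W_n)=0$. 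Accumulating these contributions over the induction produces precisely the term $-\#\{k:[\gamma_k]=0\}$ in the statement. Now apply Wall's non-additivity to $Y_n=Y_{n-1}\cup_N W_n$, which gives $\sigma(Y_n)=\sigma(Y_{n-1})+\sigma(W_n)-\tau_n$, where $\tau_n$ is the Maslov triple index of three Lagrangian subspaces of the symplectic vector space attached to the gluing hypersurface $N$.

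I would then identify that symplectic vector space with $\h_1(\Sigma_g^b,\mathfrak{d};\R)\oplus\h_1(\Sigma_g^b,\mathfrak{d};\R)$ carrying the form $(-Q)\oplus Q$. For this one first reduces to the case where $Q$ is nondegenerate on $\h_1(\Sigma_g^b,\mathfrak{d};\R)$: attaching $1$-handles to the fiber to connect up its boundary components changes neither the signature of the total space nor the monodromy action on the relative homology, and leaves a surface with at most one boundary component, for which $Q$ is already symplectic. Tracking how the mapping-torus pieces of $\partial Y_{n-1}$ and $\partial W_n$ fit together inside $\partial Y_n$ --- recalling that the monodromy around $\partial\D^2_+$ is $\phi_{n-1}$ and around $\partial\D^2_-$ is $t_{\gamma_n}$ --- one finds that the three Lagrangians are the graphs of the symplectic automorphisms $\mathrm{Id}$ (coming from $N$ itself), $\phi_{n-1}^*$ (from the $Y_{n-1}$-side) and $t_{\gamma_n}^*$ (from the $W_n$-side). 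If $[\gamma_n]=0$ then $t_{\gamma_n}^*=\mathrm{Id}$, two of these Lagrangians coincide, $\tau_n=0$, and together with $\sigma(W_n)=-1$ and the convention $\sigma_n:=0$ the inductive step holds; so it remains to treat $[\gamma_n]\neq0$.

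In that case I would evaluate $\tau_n=\mathrm{Maslov}\big(\Gamma_{\mathrm{Id}},\,\Gamma_{\phi_{n-1}^*},\,\Gamma_{t_{\gamma_n}^*}\big)$ using the standard description of the Maslov index of a triple of graphs (equivalently, the Meyer signature cocycle on $\sp(2(g+b-1),\R)$). Because $t_{\gamma_n}^*$ is the transvection $t_{\gamma_n}^*([v])=[v]+Q([v],[\gamma_n])\,[\gamma_n]$, this index reduces to the signature of a bilinear form supported on a subspace which is nonzero exactly when the linear equation \eqref{eq:eq1} for $k=n$, that is $(\mathrm{Id}-t_{\gamma_n}^*\cdots t_{\gamma_1}^*)([x_n])=[\gamma_n]$, has a solution $[x_n]$; in that event the form is one-dimensional with value $1+Q([\gamma_n],[x_n])$, so $\tau_n=\mathrm{sign}(1+Q([\gamma_n],[x_n]))=\sigma_n$, and otherwise $\tau_n=\sigma_n=0$. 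Summing $\sigma(Y_n)=\sum_{k=1}^{n}\big(\sigma(W_k)-\tau_k\big)$ over the induction then yields the asserted formula. I expect the principal obstacle to be bringing Wall's correction term into exactly this shape: one must work with the version of non-additivity in which the gluing hypersurface has nonempty boundary --- this is what forces the passage from $\h_1(\Sigma_g^b)$ to the symplectic space $\h_1(\Sigma_g^b,\mathfrak{d})$ --- then fix unambiguously which Lagrangian is which and with which orientation, since the Maslov index is antisymmetric in its arguments, and finally carry out the graph computation so as to extract the single scalar $1+Q([\gamma_n],[x_n])$.
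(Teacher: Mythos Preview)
Your proposal is correct and follows essentially the same route as the paper: induction on the number of vanishing cycles, with the inductive step given by a partial fiber sum decomposition isolating the last vanishing cycle, Wall's non-additivity identifying the correction term as a Maslov triple index of graph Lagrangians, and the transvection structure of $t_{\gamma_n}^*$ collapsing that index to the sign of the scalar $1+Q([\gamma_n],[x_n])$. The only point where your sketch is looser than the paper is the precise labeling of the three Lagrangians---the paper uses $A=\mathrm{graph}(t_{\gamma_k}^*)$, $B=\mathrm{graph}(\mathrm{id})$, and the \emph{conjugate} graph $C=\widetilde{\mathrm{graph}}(\phi_{k-1}^*)=\{(\phi_{k-1}^*(x),x)\}$ rather than $\mathrm{graph}(\phi_{k-1}^*)$---but you already flag exactly this orientation bookkeeping as the main thing to pin down.
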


\begin{remark}\begin{enumerate}
\item During the course of our proof, we will show that any solution for the linear equation \eqref{eq:eq1} gives the same  $\sigma_k$. 
\item When $b=0$ the above algorithm is in fact a reformulation of Ozbagci's algorithm. Our $\sigma_k$ agrees with Ozbagci's local signature contributions for each vanishing cycle. A computer program written in SAGE implementing the above algorithm is available at our web sites. 
\item The above theorem can be modified to compute signatures of achiral Lefschetz fibrations. One just needs to change the sign of  $\sigma_k$'s whenever the vanishing cycle  $\gamma_k$ corresponds to an achiral singularity. 
\item We also  interpret $\sigma_k$ as the   Maslov ternary index of some naturally occurring Lagrangian subspaces which are graphs of some symplectic maps associated with the monodromy of the Lefschetz fibration, \cite{clm,bcrr}.   Equivalently, in the spirit of  \cite{bcrr},  we show that $\sigma_k$ corresponds to a special value of the Meyer's cocycle .
\item A systematic study of signatures of bordered Lefschetz fibrations is initiated by Miyamura, \cite{Miyamura}. When we extend our results to relative case we benefited a lot from his ideas. 
\end{enumerate}
\end{remark}

Next we give a formula that computes the signatures of finite cyclic branched covers of Lefschetz fibrations over disk where the branch locus is a regular fiber. 

 For any linear automorphism $\varphi$, the eigenspace corresponding to a real number $\lambda$ is denoted by $E_\lambda(\varphi)$, which could be the trivial vector space if $\lambda$ is not an eigenvalue. Consider a mapping class $\phi \in {\rm Mod}(\Sigma_{g}^b)$ of the surface. For any positive integer $m,$ we can define a symmetric bilinear pairing 
\begin{align}\label{eq:bra}
I_{\phi^{m+1}}: \frac{\h_1(\Sigma_g^b, \mathfrak{d};\R)}{E_{1}(\phi^{m+1}_{*})}\times \frac{\h_1(\Sigma_g^b, \mathfrak{d};\R)}{E_{1}(\phi^{m+1}_{*})}\rightarrow \R
\end{align}
\begin{align*}
I_{\phi^{m+1}}(z_1,z_2)&=Q(\phi^{m}_{*}(z_1),z_2)+Q(\phi^{m-1}_{*}(z_1),z_2)+\ldots+Q(\phi_{*}(z_1),z_2)\\
&-Q(z_1,\phi_{*}^m (z_2))-Q(z_1,\phi_{*}^{m-1} (z_2))-\ldots-Q(z_1,\phi_{*}(z_2))
\end{align*}
for $z_1,z_2 \in \frac{\h_1(\Sigma_g^b, \mathfrak{d};\R)}{E_{1}(\phi^{m+1}_{*})}. $
Denote the signature of $I_{\phi^{m+1}}$ by $\sigma_{\phi^{m+1}_{c}}.$
Let $f:Y\rightarrow \D^2$ be the Lefschetz fibration with regular fiber $\Sigma_g^b$ given by the monodromy factorization $\phi=t_l\dots t_1$. Let $\widetilde{f}:\widetilde{Y}\rightarrow \D^2$ be the  Lefschetz fibration with regular fiber $\Sigma_g^b$ given by the monodromy factorization $\phi^n=(t_l\dots t_1)^n$. Note that $\widetilde{Y}$ is the $n$-fold cyclic branched cover of $Y$ branched along a regular fiber.
\begin{theorem}\label{thm:bra}
We have
$$\sigma(\widetilde{Y})=n\sigma(Y)-\sum_{m=1}^{n-1} \sigma_{\phi^{m+1}_{c}}$$
\end{theorem}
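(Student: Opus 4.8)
The plan is to compute $\sigma(\widetilde Y)$ by viewing the branched cover $\widetilde f:\widetilde Y\to\D^2$ as a fiber sum of $n$ copies of $f:Y\to\D^2$, and then to repeatedly apply Wall's non-additivity formula to control the defect. More precisely, write $\D^2$ as the union of $n$ sub-disks $D_1,\dots,D_n$ glued cyclically along $n-1$ arcs, with $f$ restricted over each $D_j$ a copy of the Lefschetz fibration $Y$; over the gluing regions the fiber is $\Sigma_g^b\times[0,1]$, so the total space $\widetilde Y$ is the (partial) fiber sum $Y\natural Y\natural\cdots\natural Y$ ($n$ copies). This is exactly the partial fiber sum set-up of the paper, and I would invoke the partial-fiber-sum version of Wall non-additivity established earlier: each time we glue two pieces along a copy of $\Sigma_g^b\times[0,1]$, the signature fails to be additive by a Maslov-type correction term, which is the signature of a pairing on a quotient of $\h_1(\Sigma_g^b,\mathfrak d;\R)$ determined by the monodromies of the two pieces being glued.

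The key computation is to identify these correction terms with the $\sigma_{\phi^{m+1}_c}$. I would glue the copies one at a time: after gluing the first $m$ copies we have a Lefschetz fibration with monodromy $\phi^m$, and gluing the $(m+1)$-st copy (monodromy $\phi$) along $\Sigma_g^b\times[0,1]$ produces the defect term for the pair $(\phi^m,\phi)$. Wall's formula expresses this defect as the signature of a bilinear form on a space built from the images of the two boundary inclusions inside $\h_1$ of the separating hypersurface $\Sigma_g^b\times[0,1]$; tracking through the monodromy action, the relevant space is $\h_1(\Sigma_g^b,\mathfrak d;\R)/E_1(\phi^{m+1}_*)$ (the quotient by the invariants of the composite monodromy, since those classes extend over the closed-up fiber and contribute nothing), and the form is precisely the telescoping sum $Q(\phi^m_* z_1,z_2)+\cdots+Q(\phi_* z_1,z_2)-Q(z_1,\phi^m_* z_2)-\cdots-Q(z_1,\phi_* z_2)$ appearing in \eqref{eq:bra}. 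Hence the defect at the $m$-th gluing is exactly $-\sigma_{\phi^{m+1}_c}$ (or $+$, depending on orientation conventions, to be fixed carefully). Summing over $m=1,\dots,n-1$, together with the additive contribution $n\,\sigma(Y)$ from the $n$ copies, yields the formula.

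I would organize the proof in this order: (1) set up $\D^2=D_1\cup\cdots\cup D_n$ and identify $\widetilde Y$ with the iterated partial fiber sum of $n$ copies of $Y$ along trivial fiber pieces, checking this is indeed the $n$-fold cyclic branched cover along a regular fiber; (2) recall the partial-fiber-sum/Wall non-additivity statement and note that at each of the $n-1$ gluings the non-additivity defect is the signature of a Maslov triple index; (3) compute, for the $m$-th gluing with monodromies $\phi^m$ and $\phi$, that this defect equals $-\sigma_{\phi^{m+1}_c}$ by identifying the Lagrangian triple explicitly and diagonalizing down to the pairing $I_{\phi^{m+1}}$; (4) sum up.

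The main obstacle I anticipate is step (3): correctly identifying the three Lagrangian (or isotropic) subspaces in Wall's formula for the partial fiber sum, carefully passing to the quotient by $E_1(\phi^{m+1}_*)$ so that the form is nondegenerate there, and verifying that the resulting signature is the telescoping pairing $I_{\phi^{m+1}}$ rather than some rearrangement of it — including pinning down the overall sign against the conventions already used for $\sigma_k$ in Theorem~\ref{theo:main}. A secondary subtlety is making sure the partial-fiber-sum machinery applies verbatim in the bordered case $b\geq1$, i.e.\ that the relative homology $\h_1(\Sigma_g^b,\mathfrak d;\R)$ and its intersection form $Q$ behave correctly under the gluings; here the reduction to $\mathrm{Mod}(\Sigma_{g+b-1})$ described in the introduction should let us quote the closed case.
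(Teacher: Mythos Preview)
Your approach is correct and essentially identical to the paper's: it applies Theorem~\ref{thm:partfibsum} inductively to the partial fiber sum decomposition $\phi^{m+1}=\phi\cdot\phi^m$, and identifies $\tau_V(\mathrm{graph}(\phi_*),\mathrm{graph}(\mathrm{Id}),\widetilde{\mathrm{graph}}(\phi^m_*))$ with $\sigma_{\phi^{m+1}_c}$ (this single step is recorded as Theorem~\ref{theorem:t5}, and summing gives Theorem~\ref{thm:bra}). The computation you flag as the main obstacle in step~(3) is carried out in the paper via the explicit isomorphism $z\mapsto (\mathrm{Id}+\phi_*+\cdots+\phi^m_*)(z)$ from $\h_1(\Sigma_g^b,\mathfrak{d};\R)/E_1(\phi^{m+1}_*)$ onto $W$, under which Wall's form $\Psi$ becomes exactly $I_{\phi^{m+1}}$; the bordered case is handled, as you anticipate, by passing to the closure via Theorem~\ref{thm:clos}.
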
  
Notice that when $b\neq 0$, the  sum on right hand side gives signature invariant of the fibered link $\partial \Sigma_g^b$ due to Gordon, Litherland and Murasugi \cite{GLM}. One can compute this invariant  from the Seifert matrix using a formula similar to above \cite[pp. 383]{GLM}. It is interesting to observe that the same can be done using the homological monodromy.   

Finally we give an example of Lefschetz fibration with positive signature. Such examples can easily be constructed from Stein manifolds whose handlebody diagrams are on  Legendrian knots with large positive Thurston-Bennequin number and by taking the corresponding positive allowable Lefschetz fibration, but it is not clear whether this approach yields small fiber genus. The important property of our example is that we can achieve arbitrarily large positive signature with fiber genus one. Our result is optimal in the sense that Lefschetz fibrations with planar fibers are known to have non-positive signature \cite{Miyamura}. 

\begin{theorem}\label{thm:positive}
Given any $g \geq 1, b \geq 0, n > 0$, there exists a Lefschetz fibration over $\mathbb{D}^2$ with fiber genus $g$ and $b$ boundary components, having $3n$ singular fibers,  whose  total space has signature  $n$. 
\end{theorem}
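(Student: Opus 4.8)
The plan is to build the $n=1$ case by hand from a monodromy supported in a one–holed torus, read off its signature with Theorem~\ref{theo:main}, and then obtain the general case by taking cyclic branched covers and invoking Theorem~\ref{thm:bra}.

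\emph{Step 1 (the gadget, $n=1$).} Since $g\geq 1$, choose an embedded one–holed torus $T\subset\Sigma_g^b$ spanned by a non–separating pair of curves $a,b$ with $Q([a],[b])=1$, so that $V:=\mathrm{span}\{[a],[b]\}\subset \h_1(\Sigma_g^b,\mathfrak d;\R)$ is a $2$–dimensional symplectic subspace and $\h_1(\Sigma_g^b,\mathfrak d;\R)=V\oplus V^{\perp_Q}$. Let $\gamma_1,\gamma_2,\gamma_3\subset T$ be simple closed curves with classes $[\gamma_1]=a$, $[\gamma_2]=a+3b$, $[\gamma_3]=a+7b$ (these are primitive, hence realised by simple closed curves, and non–zero in $\h_1(\Sigma_g^b,\mathfrak d;\R)$), put $\phi=t_{\gamma_3}t_{\gamma_2}t_{\gamma_1}$, and let $Y\to\D^2$ be the associated Lefschetz fibration with fiber $\Sigma_g^b$. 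Each $t_{\gamma_i}^{*}$ is a transvection along a vector of $V$, hence the identity on $V^{\perp_Q}$; therefore every operator $t_{\gamma_k}^{*}\cdots t_{\gamma_1}^{*}$ preserves $V$, the image of $\mathrm{Id}-t_{\gamma_k}^{*}\cdots t_{\gamma_1}^{*}$ lies in $V$, and one checks that the entire computation prescribed by Theorem~\ref{theo:main} takes place inside $V\cong\R^2$, i.e.\ is an elementary $2\times 2$ computation in $\mathrm{SL}_2(\Z)$. Carrying it out gives $\sigma_1=\sigma_2=0$ (the first two twists about non–isotopic essential curves of a torus always have vanishing local contribution) and $\sigma_3=-1$; since none of the $[\gamma_i]$ vanishes, Theorem~\ref{theo:main} yields $\sigma(Y)=-(0+0-1)=1$. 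A further $2\times2$ check shows that $\phi^{*}$ restricts on $V$ to a hyperbolic element of $\mathrm{SL}_2(\Z)$ (of trace $12$ in these coordinates) and is the identity on $V^{\perp_Q}$.

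\emph{Step 2 (iterate via branched covers).} Let $\widetilde f:\widetilde Y\to\D^2$ be the Lefschetz fibration with fiber $\Sigma_g^b$ and monodromy $\phi^{n}=(t_{\gamma_3}t_{\gamma_2}t_{\gamma_1})^{n}$; its factorization has $3n$ Dehn twists, so it has $3n$ singular fibers, and $\widetilde Y$ is the $n$–fold cyclic branched cover of $Y$ along a regular fiber. By Theorem~\ref{thm:bra},
\[
\sigma(\widetilde Y)=n\,\sigma(Y)-\sum_{m=1}^{n-1}\sigma_{\phi^{m+1}_{c}}=n-\sum_{m=1}^{n-1}\sigma_{\phi^{m+1}_{c}},
\]
so it suffices to show $\sigma_{\phi^{m+1}_{c}}=0$ for every $m\geq 1$. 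Since $\phi^{*}|_V$ is hyperbolic, no power of $\phi^{*}$ has eigenvalue $1$ on $V$, whence $E_{1}(\phi^{m+1}_{*})=V^{\perp_Q}$ and the symmetric form $I_{\phi^{m+1}}$ of \eqref{eq:bra} is a form on $\h_1(\Sigma_g^b,\mathfrak d;\R)/V^{\perp_Q}\cong V$. Diagonalising the hyperbolic symplectic map $\phi^{*}|_V$ over $\R$ (its two eigenlines are $Q$–isotropic and $Q$–dual), a direct computation shows that in such an eigenbasis $I_{\phi^{m+1}}$ is represented by a scalar multiple of $\left(\begin{smallmatrix}0&1\\1&0\end{smallmatrix}\right)$, whose signature is $0$. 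Hence $\sigma_{\phi^{m+1}_{c}}=0$ and $\sigma(\widetilde Y)=n$; for $n=1$ one simply takes $\widetilde Y=Y$.

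\emph{Where the difficulty lies.} Theorems~\ref{theo:main} and~\ref{thm:bra} carry essentially all of the weight, so the only genuinely new input is the gadget of Step~1. The point there is that the local contributions of the first two twists are forced to be $0$, so the \emph{third} twist must contribute $-1$; by the algorithm this amounts to making the binary quadratic form $v\mapsto 1+Q\bigl((\mathrm{Id}-t_{\gamma_2}^{*}t_{\gamma_1}^{*})^{-1}v,v\bigr)$ negative at the primitive class $[\gamma_3]$, which is impossible when $\gamma_1$ and $\gamma_2$ meet in $\leq 2$ points (the form is then semidefinite of the wrong sign) and first becomes possible once $|Q([\gamma_1],[\gamma_2])|=3$ — this is what fixes the shape of the example. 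The remaining tasks — performing the $2\times 2$ verifications, and checking that transplanting the gadget from $T$ to a general $\Sigma_g^b$ changes nothing (by the localization observed in Step~1) — are routine.
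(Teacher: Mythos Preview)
Your proof is correct and follows the same two–step architecture as the paper: build a three–twist gadget on a one–holed torus with signature $+1$, then take the $n$–fold branched cover and use Theorem~\ref{thm:bra}, showing that the correction terms $\sigma_{\phi^{m+1}_{c}}$ all vanish. Your gadget (classes $a,\,a+3b,\,a+7b$; monodromy of trace $12$) is different from the paper's Ozbagci example (classes $[1\;0],[2\;5],[1\;5]$; monodromy $\bigl(\begin{smallmatrix}11&6\\75&41\end{smallmatrix}\bigr)$), but both satisfy the same two features that make the argument run, namely $\sigma(Y)=+1$ and $\phi^{*}|_{V}$ hyperbolic with positive matrix entries.

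The one substantive divergence is in how you kill the correction terms. The paper observes that $\phi$ has all positive entries and proves a small monoid lemma: for any matrix $B$ with positive entries, each $(B^{T})^{k}J-JB^{k}$ lies in the set $\mathscr{A}=\{A:A_{12}=A_{21},\,A_{11}<0,\,A_{22}>0\}$, hence so does the sum, and every matrix in $\mathscr{A}$ has signature $0$; this feeds directly into the matrix formula~\eqref{eq:eq10} for $\sigma_{\phi^{k+1}_{c}}$. You instead use that $\phi^{*}|_{V}$ is hyperbolic, diagonalise over $\R$ onto $Q$–isotropic eigenlines, and read off that $I_{\phi^{m+1}}$ is (a scalar multiple of) the standard hyperbolic form. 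Your argument is more conceptual and more general --- it applies to \emph{any} gadget whose homological monodromy on the torus summand is hyperbolic, with no positivity hypothesis --- whereas the paper's argument is a quick self–contained matrix check that avoids diagonalisation. Either way the conclusion $\sigma_{\phi^{m+1}_{c}}=0$ follows, so both routes are valid. Your decision to embed $T$ directly in $\Sigma_g^b$ (rather than doing $\Sigma_1^1$ first and then stabilising by $1$–handles and capping, as the paper does) is also fine, and your localisation remark that the algorithm of Theorem~\ref{theo:main} restricts to $V$ is correct.
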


The first example of a positive signature Lefschetz fibration over $\mathbb{D}^2$ with fiber genus one with one boundary component was found by Ozbagci which appeared in his unpublished notes. With our techniques, we are able to promote Ozbagci's example to arbitrary genus and boundary components. It remains an open question whether there exists a Lefschetz fibration over $\S^2$ with positive signature. Thanks to Endo's result, if such a Lefschetz fibration exists then it  cannot be hyperelliptic, hence its fiber genus must be at least three. Another open problem is  whether the number of vanishing cycles in our result is optimal. More precisely, can one have a Lefschetz fibration over $\mathbb{D}^2$ with signature $n>0$ having $k<3n$ singular fibers? An easy argument due to Ozbagci shows this is impossible for $n=1$, but the other cases are still open.  


\section{Preliminaries}\label{sec:sec2}

\subsection{Symplectic Vector Spaces}

Let $V$ be a real $2g$-dimensional vector space. A skew-symmetric, non-degenerate bilinear form $Q$ on $V$ is called 
a symplectic form, and the pair $(V, Q)$ is called a symplectic vector space. A basis $\{a_1,b_1, a_2, b_2 \ldots, a_g, b_g,\}$ of $V$ is called a symplectic basis if it satisfies
$Q(a_i,a_j)= Q(b_i,b_j)=Q(a_i,b_j)=\delta_{ij}$ for $1 \leq i,j \leq g.$

Consider $\R^{2g}$ with basis $\{ x_1,y_1,\ldots, x_g,y_g\}$.  Then with respect to the  standard symplectic form 
\[
Q_{\mathrm{std}}=\sum_{n=1}^{g} dx_i\wedge dy_i,
\]
$(\R^{2g},Q_{\mathrm{std}})$ is a symplectic vector space and $\{ x_1,y_1,\ldots, x_g,y_g\}$ is a symplectic basis. Conversely any symplectic vector space $(V, Q)$ has a symplectic basis giving an isomorphism  $(V, Q)\cong (\R^{2g},Q_{\mathrm{std}})$ for some $g$.
The linear symplectic group $\sp(2g,\R)$ is defined to be the group of linear automorphisms of $\R^{2g}$ preserving the symplectic form $Q_{\mathrm{std}}$. In terms of matrices 
\[
\sp(2g,\R)=\{ A \in GL(2g,\R)\,:\, A^TJA=J\}
\]
where $J$ is the $2g \times 2g$ matrix: 
 
\[J = 
\begin{bmatrix}
0 & 1 & 0 & 0 \quad \ldots \quad 0 & 0 \\
-1 & 0 & 0 & 0 \quad \ldots \quad 0 & 0 \\
0 & 0 & 0 & 1 \quad \ldots \quad 0 & 0 \\
0 & 0 &-1 & 0 \quad \ldots \quad 0 & 0 \\
\vdots  \\
0 & 0 & 0 & 0 \quad \ldots \quad 0 & 1 \\
0 & 0 &0 & 0 \quad \ldots  -1 & 0 
\end{bmatrix}.
\]
The same definition can be made for any symplectic vector space $(V,Q)$: The set of all automorphisms $\varphi:V\to V$ satisfiying $Q(\varphi(x), \varphi(y))=Q(x, y)$ is called the symplectic group of $(V,Q)$ and is denoted by $\sp(V,Q)$. The symplectic basis theorem gives rise to an isomorphism  $ \sp(V,Q) \cong\sp(2g,\R)$ where $g=\mathrm{dim}(V)/2$.


 A subspace $L$ of a symplectic vector space $(V,Q)$ is called Lagrangian if $Q(x,y)=0$ for every $x,y \in L$ and $\mathrm{dim}(L)=\mathrm{dim}(V)/2$. Symplectic automorphisms naturally give rise to Lagrangian subspaces. 
\begin{definition}
For a given symplectic automorphism $\varphi \in \sp(V,Q)$ of the symplectic vector space $(V,Q)$, the graph and respectively conjugate graph of $\varphi$ are subspaces  defined by
\begin{align*}
\mathrm{graph}(\varphi) &=\{(x,\varphi(x)) \; | \; x\in V \},\\
\widetilde{\mathrm{graph}}(\varphi)&=\{(\varphi (x),x) \; | \; x\in V \}.
\end{align*}
\end{definition}

Both $\mathrm{graph} (\varphi)$  and $\widetilde{\mathrm{graph}}(\varphi)$ are  Lagragian subspaces of $\R^{2g} \oplus \R^{2g}$ with the symplectic form $Q \oplus -Q$
\[
((x,y),(x', y') \mapsto Q( x,x') - Q( y,y').
\]

In our context, $V$ will mostly refer to the first homology group with real coefficients of a (possibly disconnected) surface, or the relative homology group of a surface with boundary.   The intersection form on the homology with $\Z$ coeffients induces a symplectic form on $V$. Note that reversing the orientation of the surface changes the sign of the symplectic form.


\subsection{Maslov Triple (Ternary) Index}

\begin{definition} \label{defn23}\cite{Lion,clm}
For a given three Lagrangians $A,B,C$ in a symplectic vector space $(V,{Q})$, the Maslov ternary index $\tau_V(A,B,C)$ is characterized by the following properties.

\begin{enumerate}

	\item\label{Maslov:i} \textit{Skew Symmetry}: 
	For a permutation $p$ of the three letters,
	\[
	\tau_V (p(A), p(B), p(C))=\mathrm{Sign}(p) \cdot \tau_V(A,B,C),
	\]
	\noindent where $\mathrm{Sign}(p)$ is the sign of the permutation $p$.
	\item \textit{Symplectic Additivity}: For three Lagrangians $A,B,C \subset V,$ and three Lagrangians $A^{'},B^{'},C^{'}\subset W,$
	\[
	\tau_{V\oplus W}(A \oplus A^{'},B \oplus B^{'},C \oplus C^{'})=\tau_V(A,B,C)+ \tau_W(A^{'},B^{'},C^{'}).
	\]
	
	\item \textit{Symplectic Invariance}:\label{Maslov:iii}
	For a symplectic automorphism $\varphi \in \sp(V,Q),$
	\[
	\tau_V(A,B,C)=\tau_V(\varphi(A),\varphi(B),\varphi(C)).
	\]
	
	\item \label{Maslov:iv}\textit{Normalization}: 
	For the Lagrangians $\R, \R(1+i),\R(i)$ in $\R^2=\C$ with the standard skew symplectic form $Q_{\mathrm{std}}$ in $\C$,
	\[
	\tau_{\C}(\R,\R(1+i),\R(i))=-1.
	\]

\end{enumerate}
\end{definition}
By \cite[Theorem 8.1]{clm}, there exists a unique system of functions $\tau_V(A,B,C)$ which satisfies the above Properties \eqref{Maslov:i} through \eqref{Maslov:iv}, so the  definition makes sense. One can think of the Maslov ternary index as a kind of ``cross ratio'' of triplets of Lagrangians in a symplectic vector space.   If $(A,B)$ and $(A',B')$ are two pairs of transverse Lagrangians in a symplectic vector space one can find a symplectic automorphism  $\varphi $ such that $\varphi(A)=A^{'}$ and $\varphi(B)=B^{'}$. However, the symplectic group does not act transitively on triples of Lagrangians. In fact The configuration of three transverse Lagrangians is completely determined by their index.

It was also shown in \cite{clm} that the Maslov ternary index is the same as Wall's signature defect which we review now. Let  
\begin{align}\label{eqn:W}W=W_V(A,B,C)= \displaystyle \frac{B\cap(C+A)}{(B\cap C)+ (B\cap A)}.
	\end{align}
Then a bilinear map $\Psi^{'}:B\cap(C+A) \times B\cap(C+A) \rightarrow \R$ is defined by $\Psi^{'}(b,b^{'})=Q(b,c^{'})$ where $a^{'}+b^{'}+c^{'}=0$ for some $a^{'}\in A, c^{'}\in C.$ Moreover any other choice of $a^{'}$ and $c^{'}$ yields the same value, so $\Psi^{'}$ is a well-defined bilinear map on $B\cap(C+A).$ It is easy to see that $\Psi^{'}(b,b^{'})=0$ if $b$ or $b^{'}$ is in $B\cap C+B\cap A.$ Hence $\Psi^{'}$ gives rise to a symmetric nonsingular bilinear map $\Psi$ on $W.$  Then the signature of $\Psi$ equals the Maslov ternary index $\tau_V(A,B,C)$.  Note that the $\tau_V(A,B,C)=0$ whenever $\mathrm{dim}\,W=0$.  This simple observation  will be very useful throughout the paper.


\subsection{Lefschetz fibrations}
Let $Y$ be a compact, oriented smooth $4$-manifold. A Lefschetz fibration on $Y$ is a smooth surjective map $f:Y\rightarrow B$ such that:
\begin{enumerate}
	\item $\{ b_1,b_2, \ldots, b_n \}$ are the critical values of $f$ inside $B$ with $p_i \in f^{-1}(b_i)$ a unique critical point of $f$  for each $i,$ and
	\item about each $b_i$ and $p_i,$ there are local complex coordinate charts agreeing with the orientations of $Y$ and $B$ such that locally $f$ can be expressed as $f(z_1,z_2)=z_1^2+z_2^2.$
\end{enumerate}
A bordered Lefschetz fibration is a Lefschetz fibration over $\D^2$ where the fibers have non-empty boundary.  The boundary of a  bordered Lefschetz fibration defines an open book structure.

\begin{definition} 
An open book structure (or decomposition) of a $3$-manifold $X$ is a surjective map  $\pi: X \rightarrow \D^2$ such that $\pi^{-1}(\text{int}\D)$ is a disjoint union of solid tori and 
\begin{enumerate}
	\item $\pi$ is a fibration over $\S^1=\partial \D^2.$
	\item On each of the solid tori, $\pi$ is the projection map $\S^1 \times \D^2 \rightarrow\D^2.$ The centers of these solid tori are called binding.
\end{enumerate}
	So in the complement of the binding $\pi$ is a fibration over $\S^1,$ with fibers surfaces with boundary. Alternatively, we can define an open book structure on $X$ just in terms of its pages and the monodromy $(\Sigma_g^b, \phi)$ of the fibration $\pi: X\rightarrow \S^1.$ Here $\Sigma_g^b$ is an oriented surface with non-empty boundary and $\phi:\Sigma_g^b\rightarrow \Sigma_g^b$ is a diffeomorphism which is identity on $\partial \Sigma_g^b$. Then $X$ is the union of the mapping torus of $\phi$ and solid tori glued along their boundaries in the obvious way. The monodromy of the open books arising from the boundary of bordered Lefschetz fibrations  is a composition of right handed Dehn twists (coming from the vanishing cycles).

\end{definition} 

\begin{definition} Given any bordered Lefschetz  fibration $f:Y\to\mathbb{D}^2$ with fiber $\Sigma_g^b$ with $b\geq 1$, we  define its closure,  which is a new Lefschetz fibration $\overline{f}:\overline{Y}\to\mathbb{D}^2$  with closed fibers of genus $g+b-1$, as follows. We first fiberwise attach $2$-dimensional $1$-handles along different components $\partial \Sigma_g^b$ reducing the number of boundary components to one, trivially extending the monodromy over the one-handles.   Then cap off the remaining boundary component by a disk. 
\end{definition}
  
 Notice that  when $b\geq 1$ the isomorphism $\h_1(\Sigma_g^b,\mathfrak{d};\mathbb{R})\cong \h_1(\Sigma_{g+b-1};\R)$ and the natural homomorphism $\mathrm{Mod}(\Sigma_g^b)\to \mathrm{Mod}(\Sigma _{g+b-1})$ identifies  the symplectic representations of the monodromies of the original Lefschetz fibration and its closure. More precisely the basis of  $\h_1(\Sigma_g^b,\mathfrak{d};\mathbb{R})$ given on the left side of Figure \ref{fig:basis}  naturally corresponds to the basis  of $ \h_1(\Sigma_{g+b-1};\R)$ and the homological actions of any element in $\mathrm{Mod}(\Sigma_g^b)$ to these basis are the same.

\begin{figure}[h]
	\includegraphics[width=.40\textwidth]{./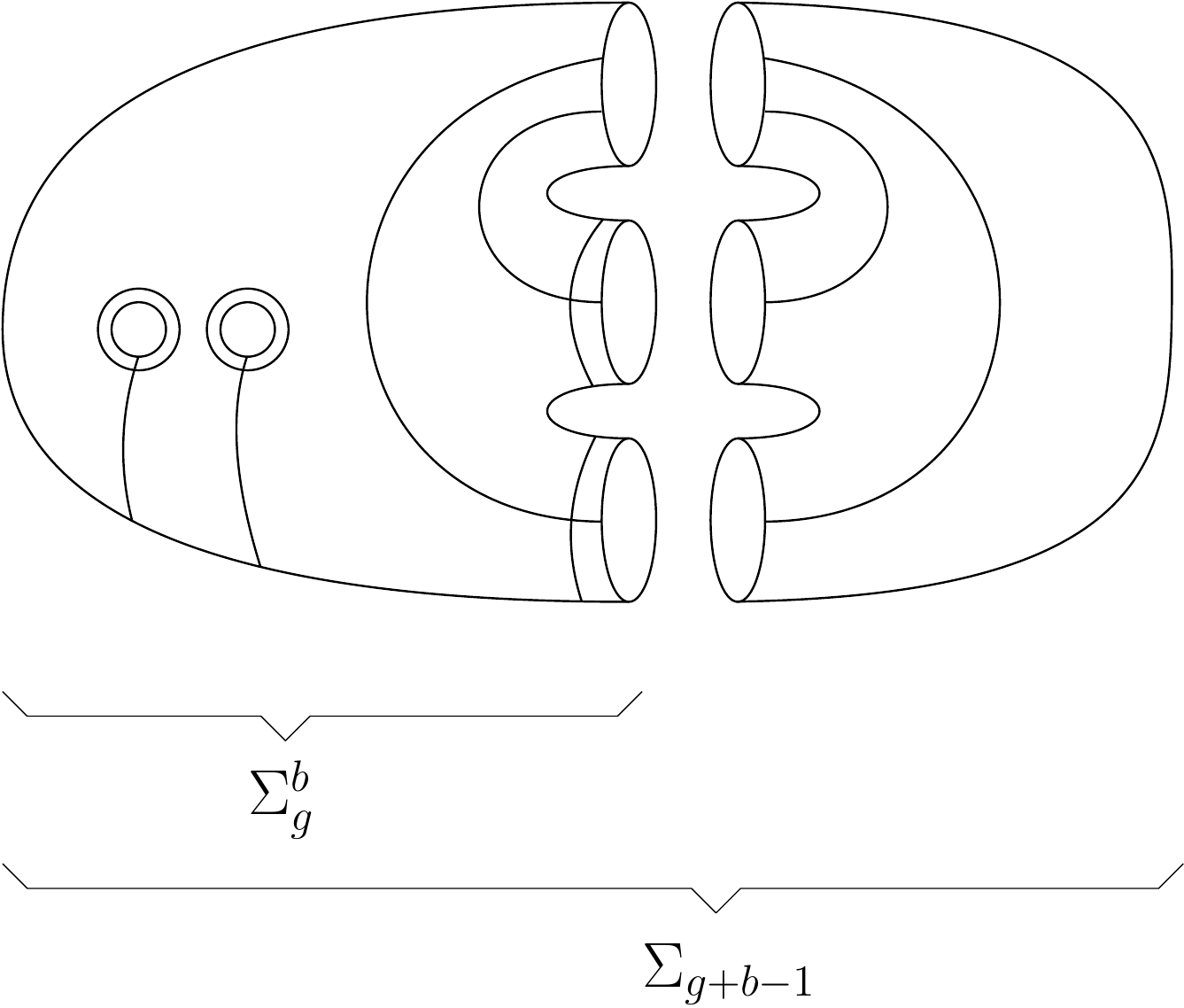}
	\caption{Closing up  relative homological bases of bordered surfaces}
	\label{fig:basis}
\end{figure}


\subsection{Signature and Wall's formula} 

Let $Y_1$ and $Y_2$ be two compact oriented $4$-manifolds. If a closed manifold $Y$ is obtained by gluing along the whole boundaries of $Y_1$ and $Y_2$ via an orientation reversing diffeomorphism, then the signature of $Y$ is equal to sum of the signatures of $Y_1$ and $Y_2.$ This is Novikov additivity \cite{as}.  On the other hand it is also possible to glue manifolds $Y_1$ and $Y_2$ along  common submanifold $X_0$ in their boundary where $X_0$ may itself have boundary. In this case, the resulting manifold $Y$ is a $4$-manifold with boundary and its signature is not the sum of the signatures of $Y_1$ and $Y_2$, the defect in this argument is a Maslov ternary index\cite{wall}.

\subsection{Wall's Non-additivity Formula}\label{subsec:Wallnonadd} 

Let $Y, Y^{-}, Y^{+}$ be $4$-manifolds, $X^{0},X^{-}, X^{+}$ be $3$-manifolds and $Z$ be a $2$-manifold such that $Y=Y^{-} \cup Y^{+},$ and $Y^{-} \cap Y^{+}=X^{0}.$ Moreover
\begin{align*}
\partial Y^{-}&=X^{-}\cup X^{0}, \\
\partial Y^{+}&=X^{0}\cup X^{+},\\
\partial X^{-}&=\partial X^{0}=\partial X^{+}=Z.
\end{align*}

Suppose $Y$ oriented, inducing orientations on $Y^{-}$ and $Y^{+}.$  The rest is oriented as follows:
\begin{align*}
\partial_{*} [Y^{-}]&=[X^{0}]-[X^{-}],\\ 
\partial_{*} [Y^{+}]&=[X^{+}]-[X^{0}],\\ 
\partial_{*} [X^{-}]&=\partial_{*}[X^{0}]=\partial_{*}[X^{+}]=[Z].
\end{align*}

\begin{figure}[h]
	\includegraphics[width=0.35\textwidth]{./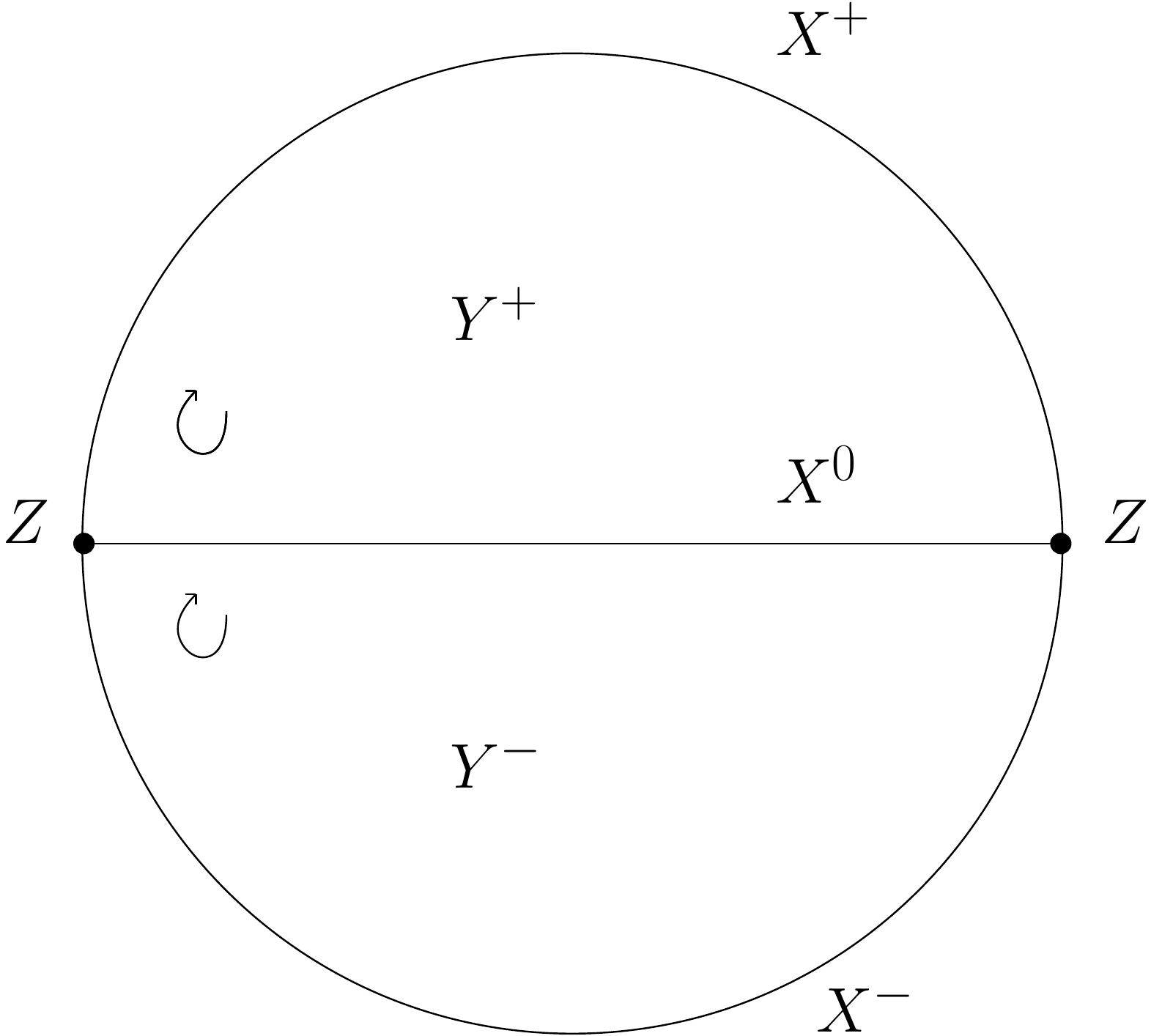}
	\caption{ }
	\label{fig:den}
\end{figure}

Let $V=\h_1(Z;\R)$.  Define the following subspaces of $V$
\begin{align}
\label{eqn:Wallset1}	A&=\text {ker }(i_{*}: \h_1(Z;\R) \rightarrow \h_1(X^{-};\R), \;  \\
\label{eqn:Wallset2}	B&=\text {ker }(i_{*}:\h_1(Z;\R) \rightarrow \h_1(X^{0};\R), \;  \\
\label{eqn:Wallset3}	C&=\text {ker }(i_{*}:\h_1(Z;\R) \rightarrow \h_1(X^{+};\R),
\end{align}
Then $\mathrm{dim} \, A= \mathrm{dim} \, B =\mathrm{dim} \, C= \frac{1}{2}\mathrm{dim} \, V.$ Denote the  symplectic intersection form of $V$ by $Q$ and $Q(A\times A)=Q(B\times B)=Q(C\times C)=0$. The subspaces $A,B$ and $C$ are Lagrangian subspaces  for $Q$. Then we have

\begin{theorem}\cite{wall}\label{theorem:2}
$\sigma(Y)=\sigma(Y^{-})+\sigma(Y^{+})-\tau_V(A,B,C).$
\end{theorem}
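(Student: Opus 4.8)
The plan is to compute all three signatures from the intersection forms on second homology and to isolate their discrepancy as a bilinear pairing on $1$-cycles of $Z$, which I then identify with Wall's form $\Psi$ already analyzed in the excerpt. Recall that for a compact oriented $4$-manifold $Y$ with boundary, $\sigma(Y)$ is the signature of the nondegenerate symmetric intersection form on $\bar{H}_2(Y) := \mathrm{im}\bigl(H_2(Y;\R)\to H_2(Y,\partial Y;\R)\bigr)$. I would first assemble the two Mayer--Vietoris sequences, one for the decomposition $Y = Y^- \cup_{X^0} Y^+$ and one for the induced decomposition of the pair $(Y,\partial Y)$, together with the long exact sequences of the pairs $(Y^{\pm},\partial Y^{\pm})$. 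These fit into one commuting diagram whose bookkeeping expresses $\bar{H}_2(Y)$ in terms of $\bar{H}_2(Y^-)$, $\bar{H}_2(Y^+)$, and a ``crossing'' summand built out of $1$-cycles in $Z$, with the connecting homomorphisms realized by the inclusion-induced maps $H_1(Z;\R)\to H_1(X^-;\R)$, $H_1(Z;\R)\to H_1(X^0;\R)$, $H_1(Z;\R)\to H_1(X^+;\R)$ whose kernels are exactly $A$, $B$, $C$.

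Next I would represent the relevant classes geometrically so that intersection numbers can be read off. A class in $\bar{H}_2(Y^-)$ (respectively $\bar{H}_2(Y^+)$) is represented by a surface in the interior of $Y^-$ (respectively $Y^+$); since two such surfaces can be pushed apart across $X^0$, the classes coming from the two pieces pair trivially, giving the block-diagonal contribution $\sigma(Y^-)+\sigma(Y^+)$. The new phenomenon comes from $1$-cycles $b\in Z$ lying in $B\cap(A+C)$: writing $b=a'+c'$ with $a'\in A$ and $c'\in C$, I cap $b$ by a $2$-chain $S_0$ in $X^0$, cap $a'$ by a chain $S^-$ in $X^-$, and cap $c'$ by a chain $S^+$ in $X^+$. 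Because $\partial(S^-+S^+-S_0)=a'+c'-b=0$ in $Z$, the union is a closed $2$-cycle in $Y$ that is invisible inside either piece alone. Pushing the caps off $\partial Y$ into the interiors, the intersection pairing of the cycle built from $b$ with that built from $b'$ is computed by the $Q$-pairing of the boundary data, reproducing $\Psi'(b,b')=Q(b,c')$ where $-b'=a'+c'$ with $a'\in A$, $c'\in C$.

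I would then check that the radical of this crossing pairing is exactly $(B\cap C)+(B\cap A)$, matching the vanishing property of $\Psi'$ recorded in the excerpt, so that the induced nondegenerate form lives on $W=B\cap(C+A)/\bigl((B\cap C)+(B\cap A)\bigr)$ and coincides with $\Psi$. Combined with the identification $\mathrm{sign}(\Psi)=\tau_V(A,B,C)$ established earlier, this gives the defect $\sigma(Y)-\sigma(Y^-)-\sigma(Y^+)=-\tau_V(A,B,C)$, where the overall sign is pinned down by the orientation conventions $\partial_*[Y^-]=[X^0]-[X^-]$ and $\partial_*[Y^+]=[X^+]-[X^0]$ recorded before the statement. (If $Z$ is disconnected one reduces to the connected case componentwise, and the purely additive pieces are handled by Novikov additivity.)

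The main obstacle I anticipate is the middle step: proving rigorously that the intersection form on $\bar{H}_2(Y)$ splits as the orthogonal direct sum of the two interior forms and the crossing form, with no residual coupling. This is a diagram chase through the Mayer--Vietoris and pair sequences that must simultaneously control (i) which $2$-cycles actually survive to $H_2(Y,\partial Y;\R)$ and what the kernels of $\bar{H}_2(Y^{\pm})\to\bar{H}_2(Y)$ are, (ii) the precise identification of the connecting maps with the inclusion-induced maps defining $A$, $B$, $C$, and (iii) the sign conventions at every stage. Once this algebraic skeleton is secured, the identification of the crossing pairing with $\Psi$ and the passage to $\tau_V(A,B,C)$ are formal.
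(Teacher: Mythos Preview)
The paper does not prove this statement at all; it is quoted with the citation \cite{wall} and used as a black box throughout. So there is no paper proof to compare your proposal against.

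Your outline is in the spirit of Wall's original argument: Mayer--Vietoris bookkeeping plus a geometric construction of ``crossing'' $2$-cycles from $1$-cycles on $Z$, whose self-pairing reproduces $\Psi'$. One warning about the target you set in your final paragraph, though: the claim that the intersection form on $\bar{H}_2(Y)$ splits as an \emph{orthogonal direct sum} of $\bar{H}_2(Y^-)$, $\bar{H}_2(Y^+)$, and the crossing piece is not true in general, and Wall's proof does not try to establish it. The maps $\bar{H}_2(Y^\pm)\to\bar{H}_2(Y)$ need not be injective, and even their images together with the crossing cycles need not span or be mutually orthogonal. Wall instead filters the relevant homology and uses that signature is additive along a filtration once the successive quotients are controlled; the form $\Psi$ on $W$ arises as the signature of a graded piece, not of a direct summand. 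If you recast your ``main obstacle'' in those terms --- filtration and subquotients rather than an orthogonal splitting --- then your geometric construction of the crossing cycles and your identification of their pairing with $\Psi'$ remain valid and the argument closes. As literally stated, however, the orthogonal-sum formulation would already fail in small examples.
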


\section{Signature of Closure}

The following statement allows us to reduce the proofs of our result to their respective special cases where the fibers are closed.
\begin{theorem} \label{thm:clos}
Signature of the total space of a bordered Lefschetz fibration is equal to that of its closure. 
\end{theorem}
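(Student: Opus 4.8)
The plan is to exhibit the closure $\overline{Y}$ of a bordered Lefschetz fibration $f:Y\to\D^2$ as the result of attaching $4$-balls to $Y$ along codimension-zero pieces of its boundary, and to apply Wall's non-additivity formula (Theorem~\ref{theorem:2}) after each attachment. Since a $4$-ball has zero signature, every such attachment changes $\sigma$ only by the Maslov ternary index $\tau_V(A,B,C)$ of the corresponding Wall triple \eqref{eqn:Wallset1}--\eqref{eqn:Wallset3}, so the whole proof reduces to showing these indices vanish. The construction of the closure involves two kinds of moves: (a) a fiberwise $2$-dimensional $1$-handle attachment joining two boundary circles of the fibers, with the monodromy extended trivially over the handle, which glues $H\times\D^2\cong D^4$ onto $Y$ (here $H$ is a $2$-dimensional $1$-handle and $\D^2$ the base disk); and (b) capping off the last remaining boundary circle of the fibers by a disk $\D^2_{\mathrm{cap}}$, which glues $\D^2_{\mathrm{cap}}\times\D^2\cong D^4$. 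Carrying out the $b-1$ moves of type (a) one at a time and then the single move of type (b) produces $\overline{Y}$, so it is enough to treat each kind of move.

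For a move of type (a), the attaching region of the $1$-handle inside a fiber is a pair of disjoint arcs lying in two distinct boundary components; crossed with the base $\D^2$, the gluing region $X^0$ is a disjoint union of two $3$-balls. Hence $Z=\partial X^0$ is two $2$-spheres, $V=\h_1(Z;\R)=0$, the three Wall Lagrangians $A,B,C$ are all trivial, and $\tau_V(A,B,C)=0$ (indeed $\dim W=0$). By Wall's formula the signature is unchanged; iterating over all $b-1$ one-handles produces a Lefschetz fibration $Y'$ with fiber $\Sigma_{g+b-1}^1$ and $\sigma(Y')=\sigma(Y)$.

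The substantive step is the capping move (b). Let $c=\partial\Sigma_{g+b-1}^1$ be the single boundary circle, so $\overline{Y}=Y'\cup_{X^0}Y^+$ with $Y^+=\D^2_{\mathrm{cap}}\times\D^2\cong D^4$ and gluing region $X^0=c\times\D^2$ the horizontal boundary of $Y'$, a solid torus. Then $Z=\partial X^0=c\times\partial\D^2$ is a torus with obvious basis $\{[c],[\partial\D^2]\}$ of $V=\h_1(Z;\R)\cong\R^2$ (the fiber-boundary circle and the base circle); the complementary solid torus in $\partial D^4=S^3$ is $X^+=\D^2_{\mathrm{cap}}\times\partial\D^2$; and $X^-=\partial Y'\setminus\mathrm{int}(X^0)=f^{-1}(\partial\D^2)$ is the mapping torus of the monodromy of $Y'$. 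I would then compute the three kernels. In $X^0$ the class $[\partial\D^2]$ bounds a meridian disk while $[c]$ is the core, so $B=\R\langle[\partial\D^2]\rangle$; dually in $X^+$ the class $[c]=[\partial\D^2_{\mathrm{cap}}]$ bounds while $[\partial\D^2]$ is the core, so $C=\R\langle[c]\rangle$; and since $[c]=[\partial\Sigma_{g+b-1}^1]=0$ in $\h_1(\Sigma_{g+b-1}^1;\R)$ (the boundary circle of a one-holed surface is a product of commutators in $\pi_1$, hence null-homologous) it remains null-homologous in the mapping torus $X^-$, whereas $[\partial\D^2]$ is the class of a section of $f:X^-\to\partial\D^2$ and is nonzero there, so $A=\R\langle[c]\rangle$. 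Thus $A=C$, hence $C+A=\R\langle[c]\rangle$ and $B\cap(C+A)=0$, so $\dim W=0$ and $\tau_V(A,B,C)=0$. Wall's formula gives $\sigma(\overline{Y})=\sigma(Y')=\sigma(Y)$.

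I expect the only genuinely delicate point to be the topological bookkeeping in move (b): correctly identifying the pieces $X^0$, $X^\pm$ and the gluing torus $Z$, and tracking the two generators of $\h_1(Z;\R)$ through the three inclusions with consistent meridian/longitude conventions. Everything else is formal: the attached pieces are $4$-balls of zero signature, and the one nontrivial homological input --- that the one-boundary-component fiber is null-homologous, which forces two of the three Wall Lagrangians to coincide --- is elementary.
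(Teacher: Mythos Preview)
Your proof is correct and follows essentially the same approach as the paper's: reduce to one boundary component, then cap off with a $\D^2\times\D^2$ and apply Wall's formula, observing that two of the three Lagrangians (those spanned by the page-boundary longitude) coincide so the correction term vanishes. The only cosmetic difference is that the paper handles the $1$-handle reduction with the one-line remark that fiberwise $2$-dimensional $1$-handles are $4$-dimensional $1$-handles on the total space (hence leave $\sigma$ unchanged), rather than by running Wall with $Z$ a union of $2$-spheres as you do.
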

\begin{proof}
We will use standard topological arguments about Lefschetz fibrations. For details, the reader can consult \cite{MR1891172} where our method already appeared in.   Attaching  $2$-dimensional $1$-handles to fibers corresponds to attaching $4$-dimensional $1$-handles to the total space, so does not change the signature. Hence it suffices to prove the statement in the case where fibers have connected boundary.

Suppose $f^{+}:Y^{+}\rightarrow \D^2$ is a bordered Lefschetz fibration with regular fiber $\Sigma_{g}^1$ which is a compact genus $g$ surface with one boundary component. Our aim is to compute $\sigma(Y^+).$ We study the construction of the closure $f$ of $f^+$  carefully. 
The restriction of $f^+$ to $\partial Y^{+}$ is an open book decomposition with a connected binding $K$ which is just a copy of  $\partial \Sigma_{g}^1$ in $\partial Y^{+}.$ The complement of a neighborhood of $\mathcal{N}(K)$ is a surface bundle over $\S^1$ with fibers $\Sigma_{g}^1$.
We can cap off the fibers of $f^+$ by attaching a $2$-handle $Y^{-}=\D^2\times\D^2$ attached along $\partial \D^2 \times \D^2$ to $\mathcal{N}(K)\subseteq \partial Y^{+}$ in such a way that the circles $\partial \D^2\times\{p\}$ are identified with the boundaries of the fibers $\Sigma_{g}^1.$ In other words the $2$-handle is attached along the binding with the page framing. The result is a Lefschetz fibration $f:Y\rightarrow \D^2$ where the fibers $\Sigma_g$ are now closed surfaces of genus $g.$ We write $Y=Y^+ \cup Y^-.$ Let 
\begin{align*}
X^{0}&=\mathcal{N}(K)=\partial \D^2 \times \D^2 \\
X^{+}&=(\partial Y^+) \setminus \mathcal{N}(K) \\
X^{-}&= (\partial Y^-) \setminus (\partial \D^2\times \D^2) \\
Z&=\partial \mathcal{N}(K) = \partial \D^2\times \partial \D^2
\end{align*}
Note that $Z$ is a torus. We refer to homology generators $\mu=(\{p\} \times \partial \D^2)$ and $\ell=(\partial \D^2 \times \{p\})$ in $V=\h_1(Z;\R)$ the meridian and the longitude respectively of the torus $Z.$
Apply Wall's formula to the above setting, equations \eqref{eqn:Wallset1}, \eqref{eqn:Wallset2}, and \eqref{eqn:Wallset3} give $A=\langle \ell \rangle$, $  
B=\langle \mu \rangle$,   $C=\langle \ell \rangle$. Then $B\cap(C+C)=\{0\}$ implying $\text{dim} \, W=0.$ Hence the Maslov  index $\tau_V(A,B,C)$ vanishes. Since $\sigma(Y^{-})=0,$ Theorem \ref{theorem:2} gives $\sigma(Y)=\sigma(Y^{+}).$
\end{proof}


\section{Wall's Formula and Partial Fiber Sum Decompositions}
 Suppose we are given a (bordered) Lefschetz fibration $f:Y\rightarrow \D^2$  with  regular fiber $\Sigma_{g}^b$ and monodromy $\phi$. If necessary by postcomposing $f$ with a diffeomorphism of a disk,
we can always identify the base disk $\D^2$ with the unit disk  $\{(x,y)\,:\, x^2+y^2\leq 1\} $ in $\R^2$. Without loss of generality, we assume there exists no singular value of $f$ on the line segment $ [-1,1]\times \{0\}.$ 
Define
\begin{eqnarray*}
D^{+}&=&\{(x,y)\in \D^2\,:\,y\geq 0 \} \\
D^{-}&=&\{(x,y)\in \D^2\,:\,y\leq 0 \} 
\end{eqnarray*}
so that $\D^2=D^{+}\cup D^{-}$ and $Y=Y^{+}\cup Y^{-}$ where  
$f^{\pm}=f|_{Y^{\pm}}$ and $Y^{\pm}={f^{\pm}}^{-1}(D^{\pm}).$ The pair $(f^{+},f^{-})$ is called a partial fiber sum decomposition of the Lefschetz fibration $f$. We denote by $\phi^+$ and $\phi^-$ the monodromies of $f^+$ and $f^-$ respectively. Let 
 \begin{align}
	V&=\h_1(\Sigma_g^b,\mathfrak{d};\R) \oplus \h_1( \Sigma_g^b,\mathfrak{d};\R), \label{eqn:partfibsum1}
\end{align}
\noindent equipped with the intersection form $Q\oplus-Q$  where $Q$ is the intersection form on $\h_1(\Sigma_g^b,\mathfrak{d};\R)$.  We define three Lagrangian subspaces of $V$ by
\begin{align}
	A&=\mathrm{graph}({\phi_{*}^{-}})\label{eqn:partfibsum2},\\
	B&=\mathrm{graph} (\text{id})\label{eqn:partfibsum3}, \\
	C &= \widetilde{\mathrm{graph}}({\phi_{*}^{+}}). \label{eqn:partfibsum4}
\end{align}

\begin{theorem}\label{thm:partfibsum} For the partial fiber sum decomposition $f=(f^+,f^-)$,
	\begin{equation}\label{eqn:partfibsum}
	\sigma(Y)=\sigma(Y^+)  + \sigma(Y^-)- \tau_V(\mathrm{graph}({\phi_{*}^{-}}), \mathrm{graph} (\text{id}), \widetilde{\mathrm{graph}}({\phi_{*}^{+}})).
	\end{equation}
	
	\end{theorem}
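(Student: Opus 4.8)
The plan is to apply Wall's non-additivity formula (Theorem \ref{theorem:2}) directly to the splitting $Y = Y^+ \cup Y^-$ along the preimage $X^0 = f^{-1}([-1,1]\times\{0\})$. So the first step is to identify the pieces of Wall's setup: here $Y^- = (f^-)^{-1}(D^-)$, $Y^+ = (f^+)^{-1}(D^+)$, the middle $3$-manifold is $X^0 = f^{-1}([-1,1]\times\{0\})$, which is a $\Sigma_g^b$-bundle over the interval $[-1,1]$, hence diffeomorphic to $\Sigma_g^b \times [-1,1]$ (trivial since the interval is contractible and there are no singular values on it), and $Z = \partial X^0 = f^{-1}(\{-1,1\}\times\{0\})$ is two disjoint copies of $\Sigma_g^b$. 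Thus $V = \h_1(Z;\R) = \h_1(\Sigma_g^b,\mathfrak{d};\R)\oplus \h_1(\Sigma_g^b,\mathfrak{d};\R)$ (using relative homology when $b\geq 1$, with the convention that orientation reversal flips the form, so one copy carries $-Q$), matching \eqref{eqn:partfibsum1}. One subtlety here is that $Z$ is a closed surface only when $b=0$; in the bordered case one must either work with relative homology as the paper does throughout, or invoke Theorem \ref{thm:clos} to reduce to the closed case first. I would take the latter route: by Theorem \ref{thm:clos}, $\sigma(Y)=\sigma(\overline Y)$, $\sigma(Y^\pm)=\sigma(\overline{Y^\pm})$, and the closure operation does not change the symplectic representations of $\phi^\pm$, so it suffices to prove the formula when $b=0$ and $Z$ is a genuine closed surface.

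The second step is to compute the three Lagrangians $A$, $B$, $C$ of Wall's formula as the kernels of the maps $\h_1(Z)\to \h_1(X^-)$, $\h_1(Z)\to \h_1(X^0)$, $\h_1(Z)\to \h_1(X^+)$ induced by inclusion. For $X^0 = \Sigma_g \times [-1,1]$, a class $(u,v)$ in $\h_1(\Sigma_g)\oplus\h_1(\Sigma_g)$ maps to $u - v$ (with appropriate signs from the boundary orientations $\partial_*[X^0] = [Z_+] - [Z_-]$), so its kernel is the diagonal $\{(u,u)\} = \mathrm{graph}(\mathrm{id})$, giving $B$ as in \eqref{eqn:partfibsum3}. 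For $X^+ = (f^+)^{-1}(D^+)$: $D^+$ deformation retracts onto the boundary arc $\partial D^+ \cap \S^1$ together with the diameter; more precisely $Y^+$ is built from $X^0 = \Sigma_g\times[-1,1]$ by attaching the handles coming from the singular fibers in the upper half disk, and one arranges that $\h_1(Z)\to\h_1(X^+)$ factors as first identifying the two copies of $\h_1(\Sigma_g)$ via the monodromy $\phi^+_*$ (going around the upper boundary from $+1$ to $-1$) and then killing whatever the vanishing cycles kill. The net effect on kernels — and this is the point where one must argue carefully using the handlebody description \cite{Kas} and that attaching a $2$-handle along a vanishing cycle in the fiber adds exactly a relation — is that $\ker(\h_1(Z)\to\h_1(X^+))$ is exactly $\widetilde{\mathrm{graph}}(\phi^+_*) = \{(\phi^+_*(x),x)\}$. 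Symmetrically, going around the lower half disk gives $A = \mathrm{ker}(\h_1(Z)\to\h_1(X^-)) = \mathrm{graph}(\phi^-_*)$. Once these identifications are in place, Wall's Theorem \ref{theorem:2} yields exactly \eqref{eqn:partfibsum} since $\tau_V$ is computed on these Lagrangians.

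The main obstacle is the rigorous verification that the inclusion-induced kernels are precisely the (conjugate) graphs of the homological monodromies, rather than something larger coming from the extra relations introduced by the $2$-handles of the singular fibers. The resolution is that $Y^+$ deformation retracts onto $\Sigma_g$ (a single regular fiber over a point on the boundary of $D^+$) with $2$-cells attached along the vanishing cycles, so $\h_1(X^+)$ is $\h_1(\Sigma_g)$ modulo the span of the vanishing cycles; meanwhile $Z$ is two fibers, one over $+1$ and one over $-1$, and the two inclusion maps $\h_1(\Sigma_g)\to\h_1(X^+)$ differ by the monodromy $\phi^+_*$ (the composition of the Picard--Lefschetz transvections), which is precisely why a class $(x,y)\in\h_1(Z)$ dies in $X^+$ iff $\phi^+_*(y)$ and $x$ become equal in the quotient, i.e.\ $x - \phi^+_*(y)$ lies in the span of the vanishing cycles. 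A short lemma — that the span of the vanishing cycles equals the image of $(\mathrm{id}-\phi^+_*)$, which holds because each transvection $t_\gamma^*$ satisfies $(\mathrm{id}-t_\gamma^*)(\h_1) = \langle[\gamma]\rangle$ and these compose correctly — then pins the kernel down to exactly $\widetilde{\mathrm{graph}}(\phi^+_*)$. After that the proof is just bookkeeping of orientations and a direct appeal to Wall's formula.
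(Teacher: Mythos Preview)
Your overall strategy matches the paper's: apply Wall's non-additivity to the splitting $Y = Y^{+}\cup_{X^{0}} Y^{-}$, and handle the bordered case by first passing to the closure via Theorem~\ref{thm:clos}. That is exactly what the paper does.

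However, you have misidentified $X^{+}$. In Wall's setup (Section~\ref{subsec:Wallnonadd}), $X^{+}$ is a \emph{$3$-manifold}: it is the part of $\partial Y^{+}$ complementary to $X^{0}$, namely $X^{+}=f^{-1}(D^{+}\cap\partial\D^{2})$, the preimage of the upper boundary semicircle. It is not $(f^{+})^{-1}(D^{+})=Y^{+}$. Since there are no critical values on $\partial\D^{2}$, this $X^{+}$ is a trivial $\Sigma_{g}$-bundle over an arc, so $X^{+}\cong\Sigma_{g}\times[0,1]$ and $\h_1(X^{+};\R)\cong\h_1(\Sigma_{g};\R)$ with \emph{no} quotient by vanishing cycles. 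The two inclusions of the endpoint fibers $f^{-1}(\pm 1)\hookrightarrow X^{+}$ differ by parallel transport along the upper arc, which is exactly the homological monodromy $\phi^{+}_{*}$; hence the kernel of $\h_1(Z)\to\h_1(X^{+})$ is $\widetilde{\mathrm{graph}}(\phi^{+}_{*})$ on the nose. The ``main obstacle'' you describe---extra relations from the $2$-handles of the singular fibers enlarging the kernel---simply does not arise, because those singular fibers lie in the interior of $Y^{+}$, not in $X^{+}$.

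Your proposed resolution is therefore unnecessary, and in fact would not recover the correct Lagrangian: with your description the kernel becomes
\[
\{(x,y):x-\phi^{+}_{*}(y)\in\mathrm{im}(\mathrm{Id}-\phi^{+}_{*})\}=\mathrm{graph}(\mathrm{id})+\widetilde{\mathrm{graph}}(\phi^{+}_{*}),
\]
which has dimension $4g-\dim E_{1}(\phi^{+}_{*})>2g$ whenever $\phi^{+}_{*}\neq\mathrm{Id}$, hence is not Lagrangian and cannot be the $C$ in Wall's formula. Once $X^{\pm}$ are correctly identified as products $\Sigma_{g}\times I$, the three kernels $A,B,C$ are immediate and the proof is a direct appeal to Theorem~\ref{theorem:2}, as the paper does.
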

\begin{proof}

We will use Wall's non-addivity formula discussed in Section \ref{subsec:Wallnonadd} to understand how signature behaves under partial fiber sum decompositions.  First  assume that fibers are closed, so $b=0$.   Define 
\begin{align*}
X^{0}&=f^{-1}([-1,1]\times \{0\}), \\
X^{-}&=f^{-1}(D^-\cap \partial \D^2), \\
X^{+}&=f^{-1}(D^+\cap \partial \D^2), \\
Z&= \partial X^{-}= \partial X^{0}= \partial X^{+}=f^{-1}(\{-1,1\}\times\{0\})=\Sigma_g\coprod -\Sigma_g.
\end{align*}
 When we choose an orientation on $Y$, it induces orientations on $Y^{+}$ and $Y^{-}$. Then orient $X^{0},X^{+}$ and $X^{-}$  as in Wall's formula.  According to the above setting $V$, $A$, $B$, and  $C$ are given as in \eqref{eqn:partfibsum1},\eqref{eqn:partfibsum2}, \eqref{eqn:partfibsum3}, and \eqref{eqn:partfibsum4} respectively. The result follows from Theorem \ref{theorem:2}.

To prove the theorem when the fibers $\Sigma_g^b$ are bordered, $b\geq 1$, we first take the closure of our Lefschetz fibration and apply partial fiber sum decomposition the same way we do the bordered Lefschetz fibration. Noticing that  $(\overline{f})^{\pm}=\overline{f^{\pm}}$, the result follows from Theorem \ref{thm:clos} and the case $b=0$.
\end{proof}

\begin{remark}Even though we do not need this for the rest of the paper, we would like to point out the relationship between our  signature defect corresponds to Meyer's 2-cocycle $\mathrm{Mey}:Sp(2g,\mathbb{Z})\times Sp(2g,\mathbb{Z})\to \mathbb{Z}$. It is known that 
$$
\mathrm{Mey}(\phi^-_*,\phi^+_*)=-\tau_V(\mathrm{graph}({\phi_{*}^{-}}), \mathrm{graph} (\text{id}), \widetilde{\mathrm{graph}}({\phi_{*}^{+}})).
$$ 
\end{remark}
\section{Proof of the Main Theorem}

In this section we prove our main theorem assuming that fibers are closed. This is sufficient by Theorem \ref{thm:clos}. The proof follows by induction on number of vanishing cycles. 

\subsection{Base step} 
This is the case when a Lefschetz fibration has only one singular fiber. It is well known that the signature of the total space of this Lefschetz fibration is $0$ if its unique vanishing cycle is non-separating and is -$1$ if the vanishing cycle is separating. See for example Ozbagci in \cite{ozbagci}. For convenience of the reader, we include the proof here:
When a Lefschetz fibration over disk has no singular fiber, then its total space is $\Sigma_g\times \D^2$ which has signature $0,$ because the second homology is generated by a regular fiber which has self-intersection $0.$ To get a singular fiber we attach a -$1$-framed $2$-handle to a curve $\gamma$ on the fiber. If $\gamma$ is non-separating, then the homotopy type of the total space is $\Sigma_{g-1} \bigvee \S^1$ and its second homology is again generated by homology class of a regular fiber which has self intersection $0,$ hence the total space has signature $0.$ When $\gamma$ is separating, the homotopy type of the total space is $\Sigma_k \bigvee \Sigma_{g-k}$ for some $1\leq k\leq g-1,$ so the second homology has rank $2.$ In addition to the homology class of a regular fiber, a second generator comes from a subsurface bounded by $\gamma$ in a regular fiber and the core of the $2$-handle. Since the $2$-handle has framing -$1,$ the resulting class has self intersection -$1.$ The intersection form in this basis is 
$\begin{bmatrix}
	0 & 0 \\
	0 & -1
\end{bmatrix},$
so signature is -$1.$

\hspace{1cm}

\subsection{Inductive step} In this step we use a partial fiber sum decomposition to reduce the number of Dehn twists appearing in the monodromy factorization of Lefschetz fibrations. For every $k \in \{1, \dots, n\}$ we are given a Lefschetz fibration $f_{k}:Y_{k}\rightarrow \D^2$  with regular fiber $\Sigma_{g}$ and monodromy $t_{\gamma_k} \cdots t_{\gamma_1}.$  In what follows we will consider a special kind of decomposition so that $D^{-}$ contains only one Lefschetz critical value corresponding to $\gamma_k$ and $D^{+}$ contains all the others, i.e., the $\gamma_i$'s where $1 \leq i \leq k-1.$ Hence $Y_{k}^{+}$ is diffeomorphic to $Y_{k-1}$, for all $k=2,\ldots, n.$

 Suppose $\phi_{k}^{\pm}:\Sigma_g \rightarrow \Sigma_g$ is the monodromy of $f_{k}^{\pm}.$ From our choices $\phi_{k}^{-}=t_{\gamma_k}$ and $\phi_{k}^{+}=t_{\gamma_{k-1}} \cdots t_{\gamma_1}$. Let $V=H_1(\Sigma_g;\R)\oplus  H_1(\Sigma_g;\R)$ equipped with the symplectic form $Q\oplus -Q$ where $Q$ is intersection form on $\Sigma_g$. Let $A_k$, $B_k$ and $C_k$ denote the subspaces $\mathrm{graph}({\phi_{k\,*}^{-}})$, $\mathrm{graph} (\mathrm{id})$, and $\widetilde{\mathrm{graph}}({\phi_{k\,*}^{+}})$ respectively.  We will show that $\sigma_{k}$ given in the statement of the theorem is equal to  $\tau_V(A_k, B_k, C_k)$, for all $k=2,\ldots, n$. This finishes the proof because the  signature formula for partial fiber sums implies
\begin{align*}
\sigma(Y_n)=\sigma(Y_{n-1})+\sigma(Y_n^{-})-\sigma_n,
\end{align*}
by base step 
\[
\sigma(Y_{n}^{-})=
\begin{cases} 
      0 & \text{if} \; \gamma_n \; \text{is non-separating} \\
      -1 & \text{if} \; \gamma_n \; \text{is separating},
   \end{cases}
\]
and by inductive step assumption we have
\[
\sigma(Y_{n-1})={-\sum_{j=1}^{n-1} \sigma_{j} } -\# \{ j \in \{1,\ldots, n-1\}; [\gamma_j]=0  \; \text{ in } \h_1(\Sigma_{g};\R)\}.
\]

\subsection{Identifying the local signatures} It remains to prove $\sigma_k=\tau_V(A_k,B_k,C_k)$. 
\begin{lemma}
Assume $\gamma_k$ is a nonseparating curve then $\mathrm{dim}(E_{1}(\phi^{-}_{k\,*}))=2g-1.$
\end{lemma}

\begin{proof}
The surface $\Sigma_g \setminus \gamma_k$ is of genus $g-1$ and has two boundary components. Let $\alpha_1,\ldots,\alpha_{2g-2}$ be a homology basis for $\h_1(\Sigma_g \setminus \gamma_k;\R),$ then they are linearly independent in $\h_1(\Sigma_g \setminus \gamma_k;\R)$ as well. Clearly $\phi^{-}_{k\,*}(\alpha_j)=\alpha_j$ for $\forall j=1,\ldots, 2g-2.$ Also $\phi^{-}_{k\,*}(\gamma_k)=\gamma_k.$ So $\gamma_k,\alpha_1,\ldots,\alpha_{2g-2}$ is a basis for $E_{1}(\phi^{-}_{k\,*}).$

\end{proof}

Let $W_k=W_V(A_k,B_k,C_k)$ as Equation \eqref{eqn:W}, and let $\widetilde{\gamma_{k}}$ be a dual of $\gamma_k.$ i.e., $Q(\gamma_k, \widetilde{\gamma_{k}})=1.$ Then
\begin{align}\label{eq:eq6}
\phi_{k\,*}^{-}(\widetilde{\gamma_{k}})=\gamma_k+\widetilde{\gamma_{k}} 
\end{align}
which implies $\mathrm{dim}\left(\frac{\h_1(\Sigma_g;\R)}{E_{1}(\phi^{-}_{k\,*})}\right)=1$ and the coset containing $\widetilde{\gamma_{k}}$ is a basis for $\frac{\h_1(\Sigma_g;\R)}{E_{1}(\phi^{-}_{k\,*})}.$
The map $x\rightarrow (x,x)$ gives rise to an isomorphism between $\frac{\h_1(\Sigma_g;\R)}{E_{1}(\phi^{-}_{k\,*})}$ and $\frac{B_k}{B_k\cap A_k}.$ Since $W_k$ is isomorphic to a subspace of a quotient space of $\frac{B_k}{B_k\cap A_k},$ the space $W_k$ is at most $1$-dimensional and is generated by the coset containing $(\widetilde{\gamma_{k}},\widetilde{\gamma_{k}}).$

Therefore it sufficies to figure out $\Psi_{W_k}((\widetilde{\gamma_{k}},\widetilde{\gamma_{k}}), (\widetilde{\gamma_{k}},\widetilde{\gamma_{k}})).$
From the description of  $\Psi_{W_k}$ in Section \ref{sec:sec2}, first look for $z_k$ and $x_k\in \h_1(\Sigma_g;\R)$ solving   
\begin{equation}\label{eq:int}
(z_k,\phi^{-}_{k\,*}z_k)+(\widetilde{\gamma_{k}}, \widetilde{\gamma_{k}})+(\phi^{+}_{k\,*}x_k,x_k)=0
\end{equation}
where $(z_k,\phi^{-}_{k\,*}z_k)\in A_k,$ $(\widetilde{\gamma_{k}}, \widetilde{\gamma_{k}}) \in B_k$ and $(\phi^{+}_{k\,*}x_k,x_k)\in C_k.$ Then
\begin{align}
z_k+\widetilde{\gamma_{k}}+\phi^{+}_{k\,*}x_k&=0 \label{eq:eq7},  \\
\phi^{-}_{k\,*}z_k+\widetilde{\gamma_{k}}+x_k&=0 \label{eq:eq8}.
\end{align}

In the equation (\ref{eq:eq8}) if we substitute $z_k$ from the equation (\ref{eq:eq7}), we obtain
\begin{align}\label{eq:eq9}
(\mathrm{Id}-\phi^{-}_{k\,*}\phi^{+}_{k\,*})\cdot x_k=\gamma_{k}.
\end{align}
If no such $x_k$ exists, then this means $(\widetilde{\gamma_{k}}, \widetilde{\gamma_{k}})$ is not in $B_k\cap(C_k+A_k).$ So dim $W_k=0$ implying $\tau_V(A_k,B_k,C_k)=0.$ Conversely if $x_k$ is any solution to the above equation then we let $z_k=-\widetilde{\gamma_{k}}-\phi^{+}_{k\,*}x_k$ from the equation (\ref{eq:eq7}), and see that equation \eqref{eq:int} has a solution.  Then by \eqref{eq:int},
\begin{align*}
 \Psi_{W_k}((\widetilde{\gamma_{k}}, \widetilde{\gamma_{k}}),(\widetilde{\gamma_{k}}, \widetilde{\gamma_{k}})) &= Q(\widetilde{\gamma_{k}},\phi^{+}_{k\,*}x_k)-Q(\widetilde{\gamma_{k}},x_k).\\
&= Q(\phi^{-}_{k\,*}\widetilde{\gamma_{k}},\phi^{-}_{k\,*}\phi^{+}_{k\,*}x_k) - Q(\widetilde{\gamma_{k}},x_k).\\
&= Q(\gamma_{k}+\widetilde{\gamma_{k}},x_k-\gamma_{k}) - Q(\widetilde{\gamma_{k}},x_k)\\
&= Q(\gamma_{k},x_k)+Q(\widetilde{\gamma_{k}},x_k)-Q(\widetilde{\gamma_{k}},\gamma_{k})-Q(\widetilde{\gamma_{k}},x_k)\\
&=Q(\gamma_{k},x_k)+1
\end{align*}
 which shows that $\tau_V(A_k, B_k, C_k)=\sigma_k$ as given in Equation (\ref{eq:eq2}). Here we used the fact that  $\phi^{-}_{k\,*}\in \sp(2g;\R)$ (so it preserves the intersection form $Q$), as well as  Equations \eqref{eq:eq6} and \eqref{eq:eq9}.


\section{Some Computational Shortcuts}
In this section, we will prove some lemmas which will be useful in our computations. Throughout we assume the fibers of our Lefschetz fibrations are closed. For the following lemma, we continue using the notation in the previous section.  

\begin{lemma}\label{lemma:l4}
Suppose $\gamma_{k}$ is non-separating. If $\phi^{+}_{k\,*}\widetilde{\gamma_{k}}=\widetilde{\gamma_{k}}$  then $\sigma_k=0.$ i.e., if the monodromy of a Lefschetz fibration preserves the dual curve of $\gamma_k,$ adding a vanishing cycle on $\gamma_k$ does not change the signature.
\end{lemma}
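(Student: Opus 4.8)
The plan is to apply the criterion established in the previous section, namely that $\sigma_k = \mathrm{sign}(1 + Q(\gamma_k, x_k))$ whenever equation \eqref{eq:eq9}, i.e. $(\mathrm{Id} - \phi^-_{k\,*}\phi^+_{k\,*})\cdot x_k = \gamma_k$, admits a solution $x_k$, and $\sigma_k = 0$ otherwise. So the first step is to exhibit an explicit solution of \eqref{eq:eq9} under the hypothesis $\phi^+_{k\,*}\widetilde{\gamma_k} = \widetilde{\gamma_k}$. The natural candidate is $x_k = \widetilde{\gamma_k}$ itself: since $\phi^+_{k\,*}$ fixes $\widetilde{\gamma_k}$ and, by \eqref{eq:eq6}, $\phi^-_{k\,*}\widetilde{\gamma_k} = \gamma_k + \widetilde{\gamma_k}$, we get $(\mathrm{Id} - \phi^-_{k\,*}\phi^+_{k\,*})(\widetilde{\gamma_k}) = \widetilde{\gamma_k} - \phi^-_{k\,*}\widetilde{\gamma_k} = \widetilde{\gamma_k} - (\gamma_k + \widetilde{\gamma_k}) = -\gamma_k$. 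That is off by a sign, so I would instead take $x_k = -\widetilde{\gamma_k}$, which gives $(\mathrm{Id} - \phi^-_{k\,*}\phi^+_{k\,*})(-\widetilde{\gamma_k}) = \gamma_k$ exactly.

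Having produced a solution, the second step is just to evaluate the sign formula \eqref{eq:eq2}: with $x_k = -\widetilde{\gamma_k}$ we have $Q(\gamma_k, x_k) = Q(\gamma_k, -\widetilde{\gamma_k}) = -Q(\gamma_k, \widetilde{\gamma_k}) = -1$ by the defining property of the dual curve. Therefore $1 + Q(\gamma_k, x_k) = 1 - 1 = 0$, and $\sigma_k = \mathrm{sign}(0) = 0$, as claimed. One should invoke part (1) of the Remark following Theorem \ref{theo:main} (proved in the previous section) to note that the value of $\sigma_k$ is independent of the choice of solution $x_k$, so it is legitimate to compute with this particular one; alternatively one can observe directly that this is forced since $\sigma_k = \tau_V(A_k, B_k, C_k)$ is intrinsically defined.

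There is essentially no obstacle here: the lemma is a one-line corollary of the machinery already set up, and the only thing to be careful about is the bookkeeping of signs (which dual curve convention, and the direction of the twist in \eqref{eq:eq6}). The conceptual content — that fixing the dual curve kills the local signature contribution — is exactly what makes the computation collapse. For completeness I would also add the geometric remark in the statement that this corresponds to the intuition that if the ambient monodromy does not move $\widetilde{\gamma_k}$, then gluing in the new singular fiber along $\gamma_k$ does not alter the intersection form in a way that changes the signature.
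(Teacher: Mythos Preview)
Your proof is correct, but it takes a different route from the paper's. The paper argues structurally: it observes that $B_k\cap A_k\cong E_1(\phi^-_{k\,*})$ has codimension one with quotient generated by $\widetilde{\gamma_k}$, and that the hypothesis $\phi^+_{k\,*}\widetilde{\gamma_k}=\widetilde{\gamma_k}$ places $\widetilde{\gamma_k}$ inside $E_1(\phi^+_{k\,*})\cong B_k\cap C_k$; hence $E_1(\phi^+_{k\,*})+E_1(\phi^-_{k\,*})$ is everything, so $W_k=0$ and the Maslov index vanishes for dimensional reasons. You instead run the algorithm of Theorem~\ref{theo:main} directly: you exhibit the explicit solution $x_k=-\widetilde{\gamma_k}$ of \eqref{eq:eq9} and compute $1+Q(\gamma_k,x_k)=0$. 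The two arguments are of course two faces of the same coin --- your $\Psi'$ evaluating to zero on $(\widetilde{\gamma_k},\widetilde{\gamma_k})$ is precisely the manifestation of that element lying in $B_k\cap C_k$ and hence vanishing in $W_k$ --- but your version has the advantage of being a direct application of the main theorem's criterion and requiring no reference back to the Maslov machinery, while the paper's version explains \emph{why} the contribution vanishes (the space $W_k$ itself collapses).
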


\begin{proof}
We know $B_k\cap A_k$ is isomorphic to $E_{1}(\phi^{-}_{k\,*})$ which has codimesion one, and the quotient space $\frac{\h_1(\Sigma_g;\R)}{E_{1}(\phi^{-}_{k\,*})}$ is generated by the coset containing $\widetilde{\gamma_{k}}.$ We also know that $B_k\cap C_k$ is isomorphic to $E_{1}(\phi^{+}_{k\,*}).$ The latter contains the homology class of $\widetilde{\gamma_{k}}$ by our assumption. Hence 
\begin{align}
E_{1}(\phi^{+}_{k\,*})+E_{1}(\phi^{-}_{k\,*})=\h_1(\Sigma_g;\R)
\end{align}
and $W_k$ is isomorphic to a subspace of the $0$-dimensional space $\frac{\h_1(\Sigma_g;\R)}{E_{1}(\phi^{+}_{k\,*})+E_{1}(\phi^{-}_{k\,*})}$.
Therefore $W_k$ is itself $0$-dimensional implying $\sigma_k=0.$

\end{proof}

The next lemma considers an arbitrary partial fiber sum decomposition of a Lefschetz fibration whose monodromy is homologically trivial.

\begin{lemma}
Let $f:Y \rightarrow \D^2$ be a Lefschetz fibration with monodromy acting trivially on homology of $\Sigma_g.$ Suppose we applied a partial fiber sum decomposition $f=(f_+,f_-)$ with the corresponding total space $Y^+$ and $Y^-,$ then
$\sigma(Y)=\sigma(Y^{-})+\sigma(Y^{+}).$ 
\end{lemma}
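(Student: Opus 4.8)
The plan is to apply Theorem~\ref{thm:partfibsum} directly and show that the Maslov ternary index $\tau_V(\mathrm{graph}(\phi_*^-),\mathrm{graph}(\mathrm{id}),\widetilde{\mathrm{graph}}(\phi_*^+))$ vanishes whenever the global monodromy $\phi_*$ acts trivially on $\h_1(\Sigma_g;\R)$. Recall that $\phi_* = \phi_*^+\phi_*^-$ (composition of the two partial monodromies), so the hypothesis says $\phi_*^+\phi_*^- = \mathrm{Id}$, i.e.\ $\phi_*^+ = (\phi_*^-)^{-1}$. Since $\tau_V(A,B,C)=0$ as soon as $\dim W_V(A,B,C)=0$, it suffices to compute $W = B\cap(C+A)/((B\cap C)+(B\cap A))$ for $A=\mathrm{graph}(\phi_*^-)$, $B=\mathrm{graph}(\mathrm{id})$, $C=\widetilde{\mathrm{graph}}(\phi_*^+)$ and show it is trivial.

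First I would identify the relevant intersections. An element of $B$ has the form $(x,x)$. It lies in $A=\mathrm{graph}(\phi_*^-)$ iff $\phi_*^-(x)=x$, i.e.\ $x\in E_1(\phi_*^-)$; so $B\cap A \cong E_1(\phi_*^-)$. It lies in $C=\widetilde{\mathrm{graph}}(\phi_*^+) = \{(\phi_*^+(y),y)\}$ iff there is $y$ with $\phi_*^+(y)=x$ and $y=x$, i.e.\ $\phi_*^+(x)=x$, so $B\cap C \cong E_1(\phi_*^+) = E_1((\phi_*^-)^{-1}) = E_1(\phi_*^-)$. Thus $B\cap C = B\cap A$, and the denominator of $W$ is just $B\cap A\cong E_1(\phi_*^-)$. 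Next I would show $(x,x)\in C+A$ forces $(x,x)\in B\cap A$. Write $(x,x) = (\phi_*^+(y),y) + (z,\phi_*^-(z))$ for some $y,z$; then $x = \phi_*^+(y)+z$ and $x = y+\phi_*^-(z)$. Subtracting and using $\phi_*^+ = (\phi_*^-)^{-1}$, apply $\phi_*^-$ to the first equation to get $\phi_*^-(x) = y + \phi_*^-(z) = x$, so indeed $x\in E_1(\phi_*^-)$ and $(x,x)\in B\cap A$. Hence $B\cap(C+A) = B\cap A$ and $W=0$, so $\tau_V = 0$ and Theorem~\ref{thm:partfibsum} gives $\sigma(Y)=\sigma(Y^-)+\sigma(Y^+)$.

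In the bordered case $b\geq 1$, exactly as in the proof of Theorem~\ref{thm:partfibsum}, I would pass to the closure: the homological monodromies of $f$, $f^+$, $f^-$ on $\h_1(\Sigma_g^b,\mathfrak{d};\R)\cong\h_1(\Sigma_{g+b-1};\R)$ agree with those of the closures, the hypothesis of homological triviality is preserved, $(\overline f)^\pm = \overline{f^\pm}$, and Theorem~\ref{thm:clos} identifies all three signatures with those of the closures, reducing to the closed case just treated.

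I do not expect a serious obstacle here; the only point requiring a little care is the elementary linear algebra showing $B\cap(C+A)\subseteq B\cap A$ (equivalently that $W$ is trivial), which hinges on the identity $\phi_*^+\phi_*^-=\mathrm{Id}$ relating the two partial monodromies to the trivial global action. Everything else is a direct citation of Theorems~\ref{thm:clos} and~\ref{thm:partfibsum} together with the basic fact, recorded after \eqref{eqn:W}, that a zero-dimensional Wall space forces a vanishing Maslov index.
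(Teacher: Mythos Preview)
Your proof is correct and proceeds along the same line as the paper's: show the Wall space is zero-dimensional, so $\tau_V$ vanishes, and then invoke Theorem~\ref{thm:partfibsum}. The only difference is cosmetic. The paper permutes the roles of the three Lagrangians (using skew-symmetry of $\tau_V$) and computes $W_V(B,A,C)$ instead: from $\phi_*^-\phi_*^+=\mathrm{Id}$ one gets $A\cap C=\{(\phi_*^+ z,z):z\in E_1(\phi_*^-\phi_*^+)\}$, which is all of $A$ since $E_1(\mathrm{Id})$ is everything; hence the denominator already equals $A\supseteq A\cap(B+C)$ and $W=0$. You instead work directly with $W_V(A,B,C)$ and show $B\cap(C+A)=B\cap A$ by the short linear-algebra computation $\phi_*^-(x)=y+\phi_*^-(z)=x$. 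Both arguments are equally elementary; yours is marginally more direct in that it avoids appealing to the skew-symmetry property.

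One small remark: in the paper's conventions the total monodromy factors as $\phi_*=\phi_*^-\phi_*^+$ rather than $\phi_*^+\phi_*^-$ (see the inductive step), but since $\phi_*^-\phi_*^+=\mathrm{Id}$ and $\phi_*^+\phi_*^-=\mathrm{Id}$ are equivalent, this does not affect your computation. Your bordered-fiber paragraph is correct but unnecessary here, as the section explicitly assumes closed fibers throughout.
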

\begin{proof}
Let $\phi^{\pm}$ denote the monodromy of $f^{\pm}.$ Then $\phi^{-}_{*}\phi^{+}_{*}=\mathrm{Id}.$ By permuting the roles of $A,B$ and $C$, we will show that $W=W_V(B,A,C)$ is $0$-dimensional.  Suppose $(x,\phi^{-}_{*}(x))\in A$ and $(\phi^{+}_{*}(z), z)\in C$ then $\phi^{-}(\phi^{+}z)=z$ which implies $z\in E_1(\phi^{-}_{*}\phi^{+}_{*}).$ Then $A\cap C= \{(\phi^{+}_{*}z,z) : z\in E_1(\phi^{-}_{*}\phi^{+}_{*})\}.$
Since $\phi^{-}_{*}\phi^{+}_{*}=\mathrm{Id}$ and $A\cap C $ is $2g$ dimensional, from the definition $A\cap (B+C)$ is at most $2g$ dimension. Hence $W$ is $0$ dimensional, so the Maslov index $\tau_V(\mathrm{graph}({\phi_{*}^{-}}), \mathrm{graph} (\text{id}), \widetilde{\mathrm{graph}}({\phi_{*}^{+}}))$ vanishes.

\end{proof}

\begin{remark}
If the monodromy is identically trivial, the above lemma can also be proved using Novikov additivity.
\end{remark}


\section{Taking Exponents}
In this section we shall prove Theorem \ref{thm:bra}. Let $f:\widetilde{Y}\rightarrow \D^2$ be a Lefschetz fibration with regular fiber $\Sigma_g^b$ and  monodromy $\phi^{m+1}.$ Consider a partial fiber sum decomposition $(f^+,f^-)$ where $f^+$ and $f^-$ are Lefschetz fibrations with monodromy $\phi^-=\phi$ and $\phi^+=\phi^m.$ The manifolds $Y^{\pm}$ corresponds to $(f^{\pm})^{-1}(D^{\pm})$ as before.
\begin{theorem}\label{theorem:t5}
We have 
\[
\sigma(Y)=\sigma(Y^+)+\sigma(Y^-)-\sigma_{{\phi^{m+1}_{c}}}.
\]
\end{theorem}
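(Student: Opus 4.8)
The statement is an immediate application of the partial fiber sum signature formula (Theorem~\ref{thm:partfibsum}) once we identify the Maslov ternary index appearing there with the quantity $\sigma_{\phi^{m+1}_c}$. Recall from the setup that $\phi^-=\phi$ and $\phi^+=\phi^m$, so that with $V=\h_1(\Sigma_g^b,\mathfrak{d};\R)\oplus\h_1(\Sigma_g^b,\mathfrak{d};\R)$ carrying the form $Q\oplus -Q$, we take $A=\mathrm{graph}(\phi_*)$, $B=\mathrm{graph}(\mathrm{id})$ and $C=\widetilde{\mathrm{graph}}(\phi^m_*)$. Theorem~\ref{thm:partfibsum} then gives
\[
\sigma(Y)=\sigma(Y^+)+\sigma(Y^-)-\tau_V(A,B,C),
\]
so the whole task reduces to showing
\[
\tau_V\bigl(\mathrm{graph}(\phi_*),\ \mathrm{graph}(\mathrm{id}),\ \widetilde{\mathrm{graph}}(\phi^m_*)\bigr)=\sigma_{\phi^{m+1}_c}.
\]

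\textbf{Key steps.}
First I would compute the Wall space $W=W_V(A,B,C)=\dfrac{B\cap(C+A)}{(B\cap C)+(B\cap A)}$ explicitly. A diagonal vector $(x,x)\in B$ lies in $C+A$ precisely when there exist $z_1,z_2$ with $(\phi^m_*z_1,z_1)+(\phi_*z_2,\phi^{2}\cdots)$—more carefully, $(x,x)=(\phi^m_*z_1,z_1)+(z_2,\phi_*z_2)$ for some $z_1,z_2\in\h_1(\Sigma_g^b,\mathfrak{d};\R)$; eliminating $z_2$ from the two coordinate equations shows this is solvable iff $x$ lies in the image of $(\mathrm{Id}-\phi^{m+1}_*)$ on a suitable representative, and that $B\cap A\cong E_1(\phi_*)$, $B\cap C\cong E_1(\phi^m_*)$ sit inside $E_1(\phi^{m+1}_*)$ diagonally. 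The upshot should be a natural isomorphism
\[
W\ \cong\ \frac{\h_1(\Sigma_g^b,\mathfrak{d};\R)}{E_1(\phi^{m+1}_*)},
\]
which is exactly the space on which $I_{\phi^{m+1}}$ in \eqref{eq:bra} is defined. Second, I would match the Wall pairing $\Psi$ on $W$ with $I_{\phi^{m+1}}$: for a diagonal class $(x,x)$ with decomposition $(x,x)=a'+b'+c'=0$-type relation (i.e. choosing $a'=-(\phi^m_*z_1,z_1)\in A$ reversed appropriately and $c'\in C$), the definition $\Psi'((x,x),(x',x'))=Q\oplus -Q\bigl((x,x),c'\bigr)$ unwinds, using $z_1$ as the solution of $(\mathrm{Id}-\phi^{m+1}_*)z_1=$ (shift of $x$) and repeated application of $Q(\phi^j_*u,\phi^j_*v)=Q(u,v)$, into a telescoping sum that reproduces exactly the alternating expression
\[
Q(\phi^{m}_*z_1,z_2)+\cdots+Q(\phi_*z_1,z_2)-Q(z_1,\phi^m_*z_2)-\cdots-Q(z_1,\phi_*z_2).
\]
Taking signatures then yields $\tau_V(A,B,C)=\sigma_{\phi^{m+1}_c}$, and substituting into Theorem~\ref{thm:partfibsum} finishes the proof. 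For $b\geq 1$ one first passes to the closure via Theorem~\ref{thm:clos}, exactly as in the proof of Theorem~\ref{thm:partfibsum}, since closure is compatible with taking powers of the monodromy and with the identification $\h_1(\Sigma_g^b,\mathfrak{d};\R)\cong\h_1(\Sigma_{g+b-1};\R)$.

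\textbf{Main obstacle.}
The bookkeeping step — producing the clean isomorphism $W\cong\h_1/E_1(\phi^{m+1}_*)$ and then checking that the Wall bilinear form $\Psi$ transported along this isomorphism is literally $I_{\phi^{m+1}}$ — is where the real work lies. The delicate points are (i) verifying that $(B\cap C)+(B\cap A)$, identified with $E_1(\phi_*)+E_1(\phi^m_*)$ on the diagonal, is exactly the kernel of the map $\h_1(\Sigma_g^b,\mathfrak{d};\R)\to\h_1/E_1(\phi^{m+1}_*)$ (this uses $E_1(\phi_*),E_1(\phi^m_*)\subseteq E_1(\phi^{m+1}_*)$ together with a dimension/image count), and (ii) tracking signs and the choice of representatives $z_1,z_2$ carefully enough that the telescoping really collapses to the stated alternating double sum rather than off by an overall sign or a reindexing. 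Once the linear-algebra dictionary between Wall's $W,\Psi$ and the pairing $I_{\phi^{m+1}}$ is set up correctly, the theorem follows formally.
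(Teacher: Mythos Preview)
Your overall strategy is exactly the paper's: apply Theorem~\ref{thm:partfibsum} and then identify the resulting Maslov index $\tau_V(A,B,C)$ with $\sigma_{\phi^{m+1}_c}$ by computing Wall's space $W$ and its form $\Psi$. The target isomorphism $W\cong \h_1(\Sigma_g^b,\mathfrak{d};\R)/E_1(\phi^{m+1}_*)$ and the conclusion $\Psi\leftrightarrow I_{\phi^{m+1}}$ are also precisely what the paper establishes.

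However, the specific route you sketch to that isomorphism has a genuine error. You assert that $B\cap A\cong E_1(\phi_*)$ and $B\cap C\cong E_1(\phi^m_*)$ ``sit inside $E_1(\phi^{m+1}_*)$ diagonally,'' and that eliminating $z_2$ shows $(x,x)\in B\cap(C+A)$ iff $x\in\mathrm{Im}(\mathrm{Id}-\phi^{m+1}_*)$. The inclusion $E_1(\phi^m_*)\subseteq E_1(\phi^{m+1}_*)$ is \emph{false} in general (take $\phi_*$ of order $m$: then $E_1(\phi^m_*)$ is everything while $E_1(\phi^{m+1}_*)=E_1(\phi_*)$), and your elimination actually gives $(\phi_*-\mathrm{Id})x\in\mathrm{Im}(\phi^{m+1}_*-\mathrm{Id})$, not $x\in\mathrm{Im}(\mathrm{Id}-\phi^{m+1}_*)$. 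So the diagonal map $x\mapsto(x,x)$ does \emph{not} induce the desired isomorphism, and your point (i) under ``Main obstacle'' cannot be carried out as stated.

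The paper sidesteps this by parametrizing $B\cap(C+A)$ differently. Writing the condition as $\phi^m_*(z)-z=\phi_*(x)-x$, the substitution $y=(\mathrm{Id}+\phi_*+\cdots+\phi_*^{m-1})(z)-x$ yields $y\in E_1(\phi_*)$, and hence every element of $B\cap(C+A)$ has the form $(P(z)-y,\,P(z)-y)$ with $P(z)=z+\phi_*z+\cdots+\phi^m_*z$. The map $z\mapsto (P(z),P(z))$ (not the diagonal) then induces the isomorphism $\h_1/E_1(\phi^{m+1}_*)\to W$, and under this map the computation of $\Psi'$ collapses directly to $I_{\phi^{m+1}}$ in one line using $Q(\phi^m_*\,\cdot\,,\phi^m_*\,\cdot\,)=Q(\cdot,\cdot)$. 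In short: right plan, wrong parametrization; the norm map $P$ is the missing ingredient. (Your closing remark about invoking Theorem~\ref{thm:clos} for $b\geq 1$ is unnecessary, since Theorem~\ref{thm:partfibsum} already covers the bordered case.)
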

\begin{proof}
We apply Theorem \ref{thm:partfibsum} to the aforementioned partial fiber sum decomposition. We have $V=H_1(\Sigma_g^b,\mathfrak{d};\R)\oplus  H_1(\Sigma_g^b,\mathfrak{d};\R)$ equipped with the symplectic form $Q\oplus -Q$ where $Q$ is intersection form on $\Sigma_g^b$, $A =\mathrm{graph}(\phi_*)$,  $B =\mathrm{graph}(\mathrm{Id})$,  $C =\widetilde{\mathrm{graph}}(\phi^m_*)$, and $W=W_V(A,B,C)$.

We must show that $\sigma_{\phi^{m+1}_{c}}=\tau_V(A,B,C).$ From the definitions, it is clear that 
\begin{align*}
B\cap A=\{(x,x):x \in E_1(\phi_{*})\}\subseteq B\cap C=\{(x,x):x \in E_1(\phi_{*}^m)\}.
\end{align*}
\begin{align*}
B\cap (C+A)&= \{(\phi_{*}(x)+z, \; \phi_{*}(x)+z) : \phi_{*}^m(z)-z=\phi_{*}(x)-x\}.
\end{align*}
Letting $y=(\mathrm{Id}+\phi_{*}+\ldots +\phi_{*}^{m-1})(z)-x$ in the above description, we see that 
\begin{eqnarray*}
\phi_{*}(y)&=& \phi_{*}(z)+\phi_{*}^2(z)+\ldots +\phi_{*}^m(z)-\phi_{*}(x)\\
&=&\phi_{*}(z)+\phi_{*}^2(z)+\ldots +\phi_{*}^{m-1}(z)+z-x \\
&=&y.
\end{eqnarray*}
Hence $y \in E_1(\phi_{*}),$ and we have
\begin{align*}
B\cap (C+A)&= \{(z+\phi_{*}(z)+\ldots+\phi_{*}^m(z)-y, z+\phi_{*}(z)+\ldots+\phi_{*}^m(z)-y) :\\
 &\hspace{5cm}y,z \in \h_1(\Sigma_g^b,\mathfrak{d};\R) \; \text{and} \; y \in E_1(\phi_{*})\}.
\end{align*}
Hence the map $z \rightarrow (z+\phi_{*}(z)+\ldots+\phi_{*}^m(z), \; z+\phi_{*}(z)+\ldots+\phi_{*}^m(z))$ gives rise to an isomorphism from $\frac{\h_1(\Sigma_g^b,\mathfrak{d};\R)}{E_1(\phi_{*}^{m+1})}$ to $W.$
It remains to identify the symmetric bilinear pairings $I_{\phi^{m+1}}$ and $\Psi$ under the above isomorphism.
We have
 \begin{align*}
&\Psi^{'}((z_1+\ldots +\phi_{*}^m z_1,\,z_1+\ldots+\phi_{*}^m z_1), (z_2+\ldots +\phi_{*}^m z_2,\,z_2+\ldots+\phi_{*}^m z_2))\\
&=-Q(z_1+\ldots+\phi_{*}^m z_1,\phi_{*}^m z_2)+Q(z_1+\ldots+\phi_{*}^m z_1, z_2)\\
 &=Q(z_1,z_2)+\ldots+Q(\phi_{*}^m(z_1),z_2)-Q(z_1, \phi_{*}^m(z_2))-\ldots-Q(\phi_{*}^m(z_1),\phi_{*}^m(z_2)) \\
&={I_{\phi^{m+1}}(z_1,z_2)}.
 \end{align*}

Here we use the fact that  $Q(z_1,z_2)=Q(\phi_{*}^m (z_1), \phi_{*}^m(z_2))$ as $\phi_{*}^{m}\in \sp(2g;\Z).$ Since the forms are identical, the result follows.
\end{proof}

For our computations we also need a matrix representation for the symmetric bilinear form $I_{\phi^{k+1}}$. Take a symplectic basis so that the intersection form on $H_1(\Sigma,\mathbb{R})$ is represented by the matrix $J$. We represent the linear map $\phi_*$ as a matrix using the same basis. Then by Equation \eqref{eq:bra}, 
\begin{align}\label{eq:eq10}
\sigma_{\phi^{k+1}_{c}}= \text{sign} \left( (\phi^T)^k \cdot J+ (\phi^T)^{k-1}\cdot J+ \cdots + \phi^T \cdot J - J\cdot \phi \cdots -J\cdot \phi^k \right). 
\end{align}

\begin{figure}[h]
	\includegraphics[width=0.40\textwidth]{./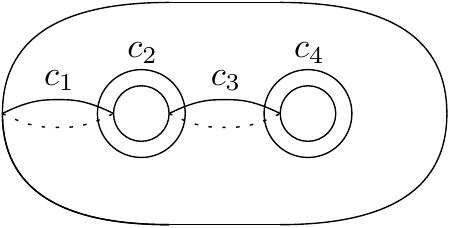}
	\caption{ The curves $c_1,c_2,c_3$ and $c_4$}
	\label{figure:3}
\end{figure}

\begin{example} Let $f:Y\rightarrow \D^2$ be a Lefschetz fibration with monodromy $(C_4C_3C_2C_1)^{10} =\mathrm{Id}$ where each $c_i \, (i=1,2,3,4)$ is a curve on genus-$2$ surface as in Figure \ref{figure:3}. It is known that the total space  of this Lefschetz fibration is homeomorphic (but not diffeomorphic) to $5\mathbb{CP}^2\sharp 29\overline{\mathbb{CP}^2}$ \cite{Matsumoto1,MR1624287}, so it has signature $-24$. We will verify this using our techniques. Let $C_i$ denote positive Dehn twist along each $c_i$. We compute the signature of $Y$ by using decomposition of $f$. Let $J$ be the $4\times 4$ matrix:
$$J = 
\begin{bmatrix}
	0 & 1 & 0 & 0  \\
 -1  & 0 & 0 & 0 \\
  0 & 0 & 0 & 1  \\
	0 & 0 &-1 & 0  
\end{bmatrix}$$ and denote the product $(C_4C_3C_2C_1)$ by $\phi.$ The matrix representation of $\phi$ is
$$ \begin{bmatrix}
	0 & 1 & 0 & -1  \\
 -1  & 0 & 0 & 0 \\
  1 & 0 & 1 & 1  \\
	-1 & 0 &-1 & 0  
\end{bmatrix}.$$ 
Since each vanishing cycle is nonseparating and dual curve of each can be chosen disjoint from monodromy, by Lemma \ref{lemma:l4} and Theorem \ref{theorem:2}, the signature of $Y;$ $\sigma_{\phi}=0.$ Now consider the Lefschetz fibration with monodromy $\phi^2.$ Again by Wall's formula 
\[
\sigma_{\phi^2}=\sigma_{\phi}+\sigma_{\phi}-\sigma_{\phi^2_{c}}
\]
and by a result of Theorem \ref{theorem:t5}, 
\[
\sigma_{\phi^2_{c}}=\mathrm{sign}(\phi^{T}J-J\phi)=\mathrm{sign}\left (
\begin{bmatrix}
	2 & 0 & 1 & 1  \\
 0  & 2 & 0 & -1 \\
  1 & 0 & 2 & 1  \\
	1 & -1 &1 & 2  
\end{bmatrix} \right ).
\]
Thus $\sigma_{\phi^2}=-4.$ By squaring the $\phi^2$, we have a new Lefschetz fibration with monodromy $\phi^4$ and its signature is $$\sigma_{\phi^4}=\sigma_{\phi^2}+\sigma_{\phi^2}-\sigma_{\phi^4_{c}}$$ where 
\[
\sigma_{\phi^4_{c}}=\mathrm{sign}((\phi^2)^{T}J-J(\phi^2))=\mathrm{sign}\left (
\begin{bmatrix}
	0 & 1 & 1 & -1 \\
  1 & 0 & 2 & 1  \\
  1 & 2 & 2 & 0  \\
-1 & 1 & 0 & 0  
\end{bmatrix} \right ).
\] 
This gives $\sigma_{\phi^4}=-8.$ Now consider the Lefschetz fibration with mondromy $\phi^5$. At this step, decomposition is applied with $\phi^4$ and $\phi.$ By Wall's non-additivity formula 
\vspace{-0.029cm}
\[
\sigma_{\phi^5}=\sigma_{\phi^4}+\sigma_{\phi}-\sigma_{\phi^5_{c}}
\] 
where  
\begin{align*}
\sigma_{\phi^5_{c}}=\mathrm{sign} [(\phi^4)^{T}J+(\phi^3)^{T}J+(\phi^2)^{T}J+(\phi)^{T}J \\
&\hspace{-2.2cm}-J(\phi)-J(\phi^2)-J(\phi^3)-J(\phi^4)] .
\end{align*}
Since $\mathrm{sign}\left (
\begin{bmatrix}
	0 & 1 & 1 & -1 \\
  1 & 0 & 2 & 1  \\
  1 & 2 & 2 & 0  \\
-1 & 1 & 0 & 0  
\end{bmatrix}\right )=4,$ $\sigma_{\phi^5}=-12.$
By squaring the monodromy $\phi^5,$ one can easily have a Lefschetz fibration with monodromy $\phi^{10}$ whose signature is $-24.$ 
\end{example}


\begin{figure}[h]
	\includegraphics[width=0.37\textwidth]{./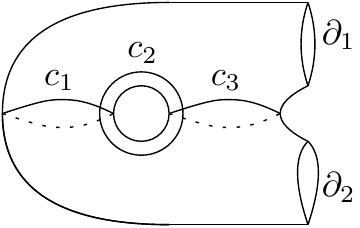}
	\label{fig:den}
	\caption{A surface with two boundary}
\end{figure}

\begin{example}
Consider the Lefschetz fibration with monodromy $$(C_3C_2C_1)^{4} =\delta_1\delta_2.$$  We will show that  the signatures of the Lefschetz fibrations corresponding to the left hand side and right hand side of this equation are $-7$ and $-1$ respectively.  Our computation is consistent with a result of Endo and Nagami \cite[Proposition 3.10]{en}  where it was proven  that the difference of the signatures of these two Lefschetz fibrations is $-6$. To compute $\sigma_{\delta_1\delta_2}$ and $\sigma_{(C_3C_2C_1)^{4}},$ attach a $1$-handle connecting the two boundary components and then cap off the boundary. By above discussion, the signature is unchanged. Note that $\delta_1=\delta_2=\delta$
in $\h_1(\widehat{\Sigma};\R)$ where $\widehat{\Sigma}$ is a surface below. 

\begin{figure}[h]
	\includegraphics[width=0.40\textwidth]{./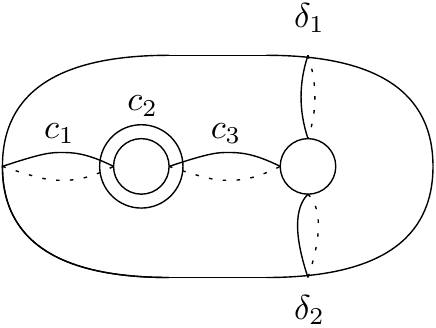}
	\label{fig:den}
	\caption{The surface $\widehat{\Sigma}$ }
\end{figure}

By the Wall non-additivity formula,
\[
\sigma_{\delta_1\delta_2}=2\sigma_{\delta}-\sigma_{\delta_{c}^2}
\]
where $\sigma_{\delta_{c}^2}=\mathrm{sign} (\delta_{*}^{T}J-J\delta_*)$ and $\delta_{*}=
 \begin{bmatrix}
	1 & 0 & 0 & 0  \\
  0 & 1 & 0 & 0  \\
  0 & 0 & 1 & 1  \\
	0 & 0 & 0 & 1 
\end{bmatrix}.$ Since $\delta$ is non-separating in  $\widehat{\Sigma}, \sigma_{\delta}=0$ and a direct computation shows $\sigma_{\delta_{c}^2}=1.$ Hence $\sigma_{\delta_1\delta_2}=-1.$ Now let $\phi=C_3C_2C_1.$ By Lemma~\ref{lemma:l4}, $\sigma_{\phi}=0.$

\begin{align*}
\sigma_{\phi^2}&=2\sigma_{\phi}-\mathrm{sign}(\phi^TJ-J\phi)=-3,\\
\sigma_{\phi^4}&=2\sigma_{\phi^2}-\mathrm{sign}((\phi^2)^TJ-J\phi^2)=-7.
\end{align*}
\end{example}


\section{Positive signature}
In this section, we will prove Theorem \ref{thm:positive}. First We need a simple obsevation about the signatures of a special class of $2\times 2$ matrices. Consider the following subset of $2 \times 2$ matrices:
\[
\mathscr{A}= \{A \in M_{2 \times 2} : A_{12}=A_{21}, \; A_{11} < 0, A_{22}> 0\}.
\]

Under the matrix addition $\mathscr{A}$ is a monoid. Note that since each $A \in \mathscr{A}$ is a symmetric matrix and diagonalizable with real eigenvalues, $ \mathrm{det A} < 0$. Therefore $\mathrm{sign (A)}=0$.

\begin{lemma} \label{lemma:l8}
Let $B$ be a matrix with positive entries. Then for $k\in\{1,\ldots, n\},$ 
\[
(B^{T})^kJ-JB^k \in \mathscr{A}.  
\]
Consequently, $\displaystyle \sum_{k=1}^{n} ((B^{T})^{k}J-JB^{k}) \in \mathscr{A}$. 
\end{lemma}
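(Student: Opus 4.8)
The plan is to reduce the whole statement to one explicit $2\times 2$ computation. Here $J$ must denote the standard $2\times 2$ symplectic matrix $\begin{bmatrix}0&1\\-1&0\end{bmatrix}$, since $\mathscr{A}$ consists of $2\times 2$ matrices; accordingly $B$ is a $2\times 2$ matrix with strictly positive entries. First I would note that the product of two $2\times 2$ matrices with positive entries again has positive entries, since each entry of the product is a sum of products of positive numbers; hence $B^{k}$ has positive entries for every $k\ge 1$. It therefore suffices to prove the single claim: \emph{if $C=\begin{bmatrix}a&b\\c&d\end{bmatrix}$ with $a,b,c,d>0$, then $C^{T}J-JC\in\mathscr{A}$}. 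Applying this with $C=B^{k}$ yields the first assertion of the lemma.

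For the claim I would simply carry out the multiplication and record the result:
\[
C^{T}J-JC=\begin{bmatrix}-2c & a-d\\ a-d & 2b\end{bmatrix}.
\]
This matrix is symmetric, its $(1,1)$ entry $-2c$ is negative, and its $(2,2)$ entry $2b$ is positive, so it lies in $\mathscr{A}$ by the defining conditions. (As a consistency check against the remark preceding the lemma, its determinant $-4bc-(a-d)^{2}$ is indeed negative, so $\mathrm{sign}=0$.)

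The final sentence of the lemma is then immediate from the monoid property stated just before it: since $\mathscr{A}$ is closed under matrix addition and every summand $(B^{T})^{k}J-JB^{k}$ lies in $\mathscr{A}$ by the first part, the finite sum $\sum_{k=1}^{n}\big((B^{T})^{k}J-JB^{k}\big)$ lies in $\mathscr{A}$ as well.

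I do not expect any genuine obstacle here; the only points requiring a little care are fixing the sign convention for $J$ and correctly tracking where the entries of $C$ land after transposing and after multiplying on the left versus the right, which is precisely why I would display the resulting matrix explicitly rather than argue abstractly. The substantive role of the lemma comes later, when it is invoked to force the associated signature contribution to vanish in the construction behind Theorem~\ref{thm:positive}.
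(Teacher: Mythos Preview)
Your proof is correct and is essentially the same as the paper's: both reduce to the explicit computation $C^{T}J-JC=\begin{bmatrix}-2c & a-d\\ a-d & 2b\end{bmatrix}$ for a positive matrix $C$, observe that powers of positive matrices are positive to handle $B^{k}$, and invoke the monoid property of $\mathscr{A}$ for the sum. The only cosmetic difference is the order in which you present the two reductions.
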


\begin{proof}
Let $B=$
$\begin{bmatrix}
	B_{11} & B_{12} \\
	B_{21} & B_{22}
\end{bmatrix}$
in $\mathscr{B}$. Then
$B^{T}J-JB=$
$\begin{bmatrix}
	-B_{21}-B_{21} & B_{11}-B_{22} \\
	-B_{22}+B_{11} & B_{12}+B_{12}
\end{bmatrix}$
which is in $\mathscr{A}$. Then for each $k$, $((B^{T})^{k}J-JB^{k}) \in \mathscr{A}$ also holds since power of positive matrices are again positive. As $\mathscr{A}$ is a monoid, the sum $\displaystyle \sum_{k=1}^{n} ((B^{T})^{k}J-JB^{k})$ is in $\mathscr{A}$ .
\end{proof}

We will also need the following example which is due to Ozbagci. 

\begin{lemma}(Ozbagci) There exists a Lefschetz fibration over $\mathbb{D}^2$ with fiber genus one and one boundary component, and the signature of the total space is $+1$.

\end{lemma}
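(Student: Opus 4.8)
The plan is to construct a one-holed torus $\Sigma_1^1$ together with an explicit triple of curves and show, via the matrix formula \eqref{eq:eq10} together with the exponentiation theorem (Theorem \ref{theorem:t5}), that a suitable power of a product of right-handed Dehn twists produces a Lefschetz fibration with total space of signature $+1$. The homological monodromy of $\mathrm{Mod}(\Sigma_1^1)$ acts on $\h_1(\Sigma_1^1,\mathfrak d;\R)\cong \R^2$ through the full group $\sp(2,\R)=\mathrm{SL}(2,\R)$, and the Dehn twists along the two standard generating curves $a,b$ of the torus map to the matrices $\bigl(\begin{smallmatrix}1&1\\0&1\end{smallmatrix}\bigr)$ and $\bigl(\begin{smallmatrix}1&0\\-1&1\end{smallmatrix}\bigr)$ (or their inverses, depending on orientation conventions). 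First I would choose a product $\phi = t_{\gamma_3}t_{\gamma_2}t_{\gamma_1}$ of three right-handed Dehn twists whose homological action $\phi_*$ is a matrix $B$ with strictly positive entries; a short check (e.g. $B = \bigl(\begin{smallmatrix}1&1\\0&1\end{smallmatrix}\bigr)\bigl(\begin{smallmatrix}1&0\\1&1\end{smallmatrix}\bigr)\bigl(\begin{smallmatrix}1&1\\0&1\end{smallmatrix}\bigr)$, realized by three nonseparating curves) shows this is possible, with each $\gamma_i$ nonseparating and with dual curve disjoint from the running monodromy so that $\sigma_{\phi}=0$ by Lemma \ref{lemma:l4}.

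Next I would iterate. Applying Theorem \ref{theorem:t5} repeatedly to the powers $\phi^{m+1}=\phi^m\cdot\phi$ (or doubling $\phi^{2^j}$), the signature of the total space $Y_n$ of the Lefschetz fibration with monodromy $\phi^n$ telescopes to
\[
\sigma(Y_n) = n\,\sigma(Y_1) - \sum_{m=1}^{n-1}\sigma_{\phi^{m+1}_c} = -\sum_{m=1}^{n-1}\mathrm{sign}\!\left(\sum_{k=1}^{m}\bigl((B^T)^kJ - JB^k\bigr)\right),
\]
using $\sigma(Y_1)=\sigma_\phi=0$ and formula \eqref{eq:eq10}. Now Lemma \ref{lemma:l8} applies: since $B$ has positive entries, each partial sum $\sum_{k=1}^m\bigl((B^T)^kJ-JB^k\bigr)$ lies in the monoid $\mathscr A$, hence has signature $0$. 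Therefore \emph{every} term $\sigma_{\phi^{m+1}_c}$ vanishes and $\sigma(Y_n)=0$ for all $n$ — which is the wrong answer, so a naive product of three positive-matrix twists will not do. The correct construction must instead arrange that exactly \emph{one} of the intermediate signature corrections is $-1$ while the rest vanish; I would do this by taking $\phi$ so that $\phi_*$ is \emph{not} positive but some specific power $\phi_*^N$ is positive, and such that the single matrix $\phi_*^TJ-J\phi_*$ (the $m=1$ correction) has signature $-1$ — i.e. is negative definite — while Lemma \ref{lemma:l8} kills all corrections at powers beyond the point where positivity kicks in. Concretely, choosing three right-handed Dehn twists whose product acts as a matrix $B$ with $B^T J - JB$ negative definite, and checking directly (finite computation) that the remaining finitely many corrections $\sigma_{\phi^{m+1}_c}$ up to the threshold vanish while $B^N$ becomes positive, yields $\sigma(Y_n) = +1$ for all $n \ge$ that threshold; since we only need one value of $n$, this suffices.

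The main obstacle is the explicit choice of the three curves on $\Sigma_1^1$: one must simultaneously (i) have $\phi=t_{\gamma_3}t_{\gamma_2}t_{\gamma_1}$ with each $\gamma_i$ nonseparating and the running dual curves disjoint from the monodromy so $\sigma_\phi = 0$, (ii) make $\phi_*^T J - J\phi_*$ negative definite so that the first correction contributes $-1$, and (iii) control the finitely many remaining corrections $\sigma_{\phi^{m+1}_c}$ — this is where one invokes Lemma \ref{lemma:l8} after verifying that $\phi_*$ (or a small power) has positive entries, reducing (iii) to a finite verification. Since $\mathrm{SL}(2,\Z)$ is concrete, this reduces to exhibiting one $2\times 2$ integer matrix $B\in\mathrm{SL}(2,\Z)$, expressible as a product of the elementary positive-twist matrices, with the negative-definiteness property and eventual positivity, and then reading off the curves from the word. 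This is precisely Ozbagci's example; I would present his matrix, verify $\sigma_\phi=0$ via Lemma \ref{lemma:l4}, compute $\mathrm{sign}(\phi_*^TJ-J\phi_*)=-1$, check positivity of the relevant power and apply Lemma \ref{lemma:l8} to conclude, thereby producing the desired Lefschetz fibration of signature $+1$.
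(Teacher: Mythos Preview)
Your proposal misidentifies what has to be proved and how. The paper does \emph{not} obtain the signature $+1$ example by exponentiating a monodromy and controlling the correction terms of Theorem~\ref{thm:bra}; that machinery is used later, in the proof of Theorem~\ref{thm:positive}, to pass from $+1$ to an arbitrary $n$ by taking $\phi^n$ with $\sigma(Y)=+1$ already in hand. For Ozbagci's lemma itself the paper simply writes down three explicit nonseparating curves on $\Sigma_1^1$ (with homology classes $[1\;0]$, $[2\;5]$, $[1\;5]$), records the three $2\times 2$ Dehn--twist matrices, and applies the algorithm of Theorem~\ref{theo:main} directly to the resulting three--twist fibration, obtaining $\sigma(Y)=+1$. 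There are no powers, no appeal to Lemma~\ref{lemma:l8}, and no use of Lemma~\ref{lemma:l4}; the point is precisely that for these three curves the local contributions $\sigma_1,\sigma_2,\sigma_3$ do \emph{not} all vanish.

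Your alternative route---start from a three--twist word with $\sigma_\phi=0$ and manufacture $+1$ from the correction terms $\sigma_{\phi^{m+1}_c}$---has a genuine obstruction, not just missing details. On $\Sigma_1^1$ the space $\h_1(\Sigma_1^1,\mathfrak d;\R)$ is $2$--dimensional, so unless $\phi_*^{m+1}$ is parabolic the quotient $\h_1/E_1(\phi_*^{m+1})$ has even dimension and $\sigma_{\phi^{m+1}_c}\in\{-2,0,2\}$. In particular a ``negative definite'' $2\times 2$ matrix has signature $-2$, not $-1$ as you assert; with $\sigma(Y_1)=0$ this would give $\sigma(Y_2)=+2$, not $+1$. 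To get a correction equal to $-1$ you would need some power $\phi_*^{m+1}$ to be parabolic, which is incompatible with your plan of making $\phi_*$ (or a small power) have strictly positive entries: such a matrix is hyperbolic with trace $\geq 3$, and all of its powers remain hyperbolic, never parabolic. Thus once Lemma~\ref{lemma:l8} applies, every correction is even and the running signature stays even forever. Finally, your closing sentence---``present [Ozbagci's] matrix, verify $\sigma_\phi=0$ via Lemma~\ref{lemma:l4}''---is exactly wrong: in Ozbagci's example $\sigma_\phi=+1$, and Lemma~\ref{lemma:l4} does not apply because the dual curves are not fixed by the running monodromy. The honest proof here is the short direct computation the paper gives.
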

\begin{proof} Let $\Sigma=\Sigma_1^1$ be a surface of genus one with one boundary component. As $H_1(\Sigma;\mathbb{Z})=\mathbb{Z}\times \mathbb{Z}$, we can denote its homology classses by row vectors $[a\;b]$ where $a$ and $b$ are integers. 
Let $\gamma_1, \gamma_2$ and $\gamma_3$ be simple closed curves representing the homology classes $\gamma_3=[1 \; 5], \gamma_2=[2 \; 5]$ and $\gamma_1=[1 \; 0]$ respectively. The homology action of Dehn twist along $\gamma_3$ is $C_3=$
$\begin{bmatrix}
	-4 & 1\\
	-25 & 6
\end{bmatrix}$, along $\gamma_2$ is $C_2=$
$\begin{bmatrix}
	-9 & 4 \\
	-25 & 11
\end{bmatrix}$, and along $\gamma_1$ is $C_1=$
$\begin{bmatrix}
	1 & 1\\
	0 & 1
\end{bmatrix}$.

Let $f:Y\rightarrow \D^2$ be a Lefschetz fibration with three vanishing cycles $\gamma_1,\gamma_2$ and $\gamma_3$ on the regular fiber $\Sigma_1^1$ and the monodromy factorization of $f$ is $\phi=C_3C_2C_1=$
$\begin{bmatrix}
	11 & 6\\
	75 & 41
\end{bmatrix}$.

By using the formula in Theorem \ref{theo:main}, it can be shown that the signature of the total space is $1$. 

\end{proof}
Now we will construct a Lefschetz fibration with signature $n$.

Let $\widetilde{f}:\widetilde{Y}\rightarrow \D^2$ be the  Lefschetz fibration with regular fiber $\Sigma_1^1$ given by the monodromy factorization $\phi^n=(C_3C_2C_1)^n$ and $\widetilde{Y}$ is the $n$-fold cyclic branched cover of $Y$ branched along a regular fiber. By the Theorem \ref{thm:bra}, we have
$$\sigma(\widetilde{Y})=n\sigma(Y)-\sum_{k=1}^{n-1} \sigma_{\phi^{k+1}_{c}}.$$
We will show that $\sum_{k=1}^{n-1} \sigma_{\phi^{k+1}_{c}}=0$.
For each $k$, $\sigma_{\phi^{k+1}_{c}}=0$ implies that  the signature of
 $\displaystyle \sum_{i=1}^{k} ((\phi^{T})^{i}J-J\phi^{i}) \in \mathscr{A}$ is zero by the Equation \eqref{eq:eq10}. It sufficies to show that the determinant of this $2\times 2$ matrices summation is negative.
By Lemma~\ref{lemma:l8}, the matrices in the equation belong to $ \mathscr{A}$. The result follows for $g=b=1$.

For $g> 1, b\geq 1$, we attach one handles as required to make fiber $\Sigma_g^b$ without changing the signature . Finally if $b=0$, we first do the construction for $b=1$ and cap off the boundary as in Theorem~\ref{thm:clos}.

\section*{Acknowledgements} We are grateful to Burak Ozbagci for sharing his old notes with us. A special thanks goes to Ferit \"Ozt\"urk for noticing a mistake in the earlier version.  While working on this project A\c{C} was supported by TUBITAK postdoctoral fellowship BIDEB-2218, No:1929\text{B}011700264 (2017/2) and \c{C}K was supported by BAGEP award of the Science Academy and  Bo\u{g}azi\c{c}i University Research Fund Grant Number 12482.

\bibliography{References}
\bibliographystyle{amsalpha}

\end{document}